\newtheorem{thm}{\bfseries Theorem}
\newtheorem{lem}[thm]{\bfseries Lemma}        
\newtheorem{prop}[thm]{\bfseries Proposition} 
\newtheorem{cl}[thm]{\bfseries Claim}
\begin{document}
\title{On $k$-Submodular Relaxation\footnotemark[1]}
\author{Hiroshi HIRAI\footnotemark[2] \and Yuni IWAMASA\footnotemark[2]}
\footnotetext[1]{A preliminary version of this paper has appeared in the proceedings of the 9th Hungarian-Japanese Symposium on Discrete Mathematics and Its Applications.
This research was supported by JSPS KAKENHI Grant Numbers 25280004, 26280004, 26330023.
The second author was supported by JST, ERATO, Kawarabayashi Large Graph Project.}
\footnotetext[2]{Department of Mathematical Informatics,
Graduate School of Information Science and Technology,
University of Tokyo, Tokyo, 113-8656, Japan.\\
Email:\{hirai, yuni\_iwamasa\}@mist.i.u-tokyo.ac.jp
}
\maketitle


\begin{abstract}
	$k$-submodular functions, introduced by Huber and Kolmogorov, are functions defined on $\{0, 1, 2, \dots, k\}^n$ satisfying certain submodular-type inequalities.
	$k$-submodular functions typically arise as relaxations of NP-hard problems, and the relaxations by $k$-submodular functions play key roles in design of efficient, approximation, or fixed-parameter tractable algorithms.
	Motivated by this, we consider the following problem:
	Given a function $f : \{1, 2, \dots, k\}^n \rightarrow \mathbb{R} \cup \{+ \infty\}$, determine whether $f$ can be extended to a $k$-submodular function $g : \{0, 1, 2, \dots, k\}^n \rightarrow \mathbb{R} \cup \{+ \infty\}$, where $g$ is called a $k$-submodular relaxation of $f$, i.e., the restriction of $g$ on $\{1, 2, \dots, k\}^n$ is equal to $f$.
	
	We give a characterization, in terms of polymorphisms, of the functions which admit a $k$-submodular relaxation, and also give a combinatorial $O((k^n)^2)$-time algorithm to find a $k$-submodular relaxation or establish that a $k$-submodular relaxation does not exist.
	Our algorithm has interesting properties: (1) If the input function is integer valued, then our algorithm outputs a half-integral relaxation, and (2) if the input function is binary, then our algorithm outputs the unique optimal relaxation.
	We present applications of our algorithm to valued constraint satisfaction problems.
\end{abstract}
\begin{quote}
{\bf Keywords: }
$k$-submodular function, $k$-submodular relaxation, valued constraint satisfaction problems
\end{quote}
\vspace{5mm}

\section{Introduction}
A {\em $k$-submodular function} (Huber and Kolmogorov \cite{ISCO/HK12}) is a function $f$ on $\{0, 1, 2, \dots, k\}^n$ satisfying the following inequalities,
\begin{align}
f(x) + f(y) \geq f(x \sqcap y) + f(x \sqcup y) \quad (x, y \in \{0, 1, 2, \dots, k\}^n),\label{ineq:ksub}
\end{align}
where binary operations $\sqcap, \sqcup$ are defined by
\[
(x \sqcap y)_i := \begin{cases} x_i & \text{if $x_i = y_i$,} \\ 0 & \text{if $x_i \neq y_i$,} \end{cases} \qquad (x \sqcup y)_i := \begin{cases} x_i & \text{if $x_i = y_i$,} \\ 0 & \text{if $0 \neq x_i \neq y_i \neq 0$,} \\ y_i & \text{if $x_i = 0$,}\\ x_i & \text{if $y_i = 0$} \end{cases}
\]
for $x = (x_1, x_2, \dots, x_n)$ and $y = (y_1, y_2, \dots, y_n)$.
Observe that $1$-submodular functions are submodular functions and $2$-submodular functions are bisubmodular functions (see \cite{book/Fujishige05}).

$k$-submodular functions typically arise as relaxations of NP-hard problems, and the relaxations by $k$-submodular functions, $k$-submodular relaxations, play key roles in the design of efficient, approximation, or fixed-parameter tractable (FPT) algorithms.
For a function $f$ on $\{1, 2, \dots, k\}^n$, a {\em $k$-submodular relaxation} \cite{ICCV/GK13, SICOMP/IWY16} of $f$ is a function $g$ on $\{0, 1, 2, \dots, k\}^n$ such that $g$ is $k$-submodular and the restriction of $g$ to $\{1, 2, \dots, k\}^n$ is equal to $f$.
Gridchyn and Kolmogorov \cite{ICCV/GK13} showed that the Potts energy function, a generalization of the objective of multiway cut, has a natural $k$-submodular relaxation, and that this relaxation is useful in computer vision applications.
Iwata, Wahlstr{\"o}m, and Yoshida \cite{SICOMP/IWY16} developed a general framework of FPT algorithms with introducing the concept of a {\em discrete relaxation}, where a $k$-submodular relaxation is a primary and important example of discrete relaxations.
Hirai \cite{DO/H15} introduced a class of discrete convex functions that can be locally relaxed to $k$-submodular functions, and designed efficient algorithms for some classes of multiflow and network design problems.

In view of these appearances and applications of $k$-submodular functions, it is quite natural and fundamental to consider the following problem:
Given a function $f$ on $\{1, 2, \dots, k\}^n$, determine whether there exists a $k$-submodular relaxation of $f$, and find a $k$-submodular relaxation if it exists.

The main results of this paper are a characterization of those functions which admit $k$-submodular relaxations, and a fast combinatorial algorithm to find a $k$-submodular relaxation.
Let $[k] := \{1, 2, \dots, k\}$ and $[0, k] := [k] \cup \{0\}$.
In this paper, functions can take the infinite value $+ \infty$, where $a < + \infty$ and $a + \infty = + \infty$ for $a \in \mathbb{R}$.
The $k$-submodular inequality (\ref{ineq:ksub}) is interpreted in this way.
(In the case of $f(x) = +\infty$ or $f(y) = +\infty$, (\ref{ineq:ksub}) trivially holds even if $f(x \sqcap y) = +\infty$ and $f(x \sqcup y) = +\infty$.)
Let $\overline{\mathbb{R}} := \mathbb{R} \cup \{+ \infty\}$.
For a function $f : D^n \rightarrow \overline{\mathbb{R}}$, let ${\rm dom}\ f := \{ x \in D^n \mid f(x) < + \infty \}$.
We show that the $k$-submodular extendability is characterized by a certain operation on $[k]$.
Let us define a ternary operation $\theta : [k]^3 \rightarrow [k]$ by
\[ \theta(a, b, c) := \begin{cases} a & \text{if $a = b$}, \\ c & \text{if $a \neq b$}. \end{cases} \]
The ternary operation $\theta$ is extended to a ternary operation $([k]^n)^3 \rightarrow [k]^n$ by $(\theta(x, y, z))_i = \theta(x_i, y_i, z_i)$.
Note that $\theta$ is a majority operation (in the sense of \cite{book/Zivny12}), since $\theta(a, a, b) = \theta(a, b, a) = \theta(b, a, a) = a$.
In universal algebra, $\theta$ is known as the {\it dual discriminator} \cite{AU/CHR99}; this fact was pointed out by A. Krokhin and the referees.

\begin{thm}\label{thm:chara}
A function $f : [k]^n \rightarrow \overline{\mathbb{R}}$ admits a $k$-submodular relaxation if and only if $\theta(x, y, z) \in {\rm dom}\ f$ for all $x, y, z \in {\rm dom}\ f$. 
\end{thm}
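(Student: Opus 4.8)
The plan is to prove the two implications separately: the ``only if'' direction reduces to a single algebraic identity, while the ``if'' direction requires constructing a relaxation and is where essentially all the work lies.

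For necessity, suppose $g$ is a $k$-submodular relaxation of $f$. Since $g$ agrees with $f$ on $[k]^n$, we have ${\rm dom}\,f = {\rm dom}\,g \cap [k]^n$. The first step is the elementary observation that ${\rm dom}\,g$ is closed under $\sqcap$ and $\sqcup$: if $g(x),g(y)<+\infty$ then (\ref{ineq:ksub}) forces $g(x\sqcap y)+g(x\sqcup y)<+\infty$, so both summands are finite. Consequently ${\rm dom}\,g$ is closed under every operation obtained by composing $\sqcap$ and $\sqcup$. The crux of this direction is then to exhibit $\theta$ as such a composition on $[k]^n$; I would verify coordinatewise the identity
\[ \theta(x,y,z) = \left( (x \sqcap y) \sqcup z \right) \sqcup (x \sqcap y) \qquad (x,y,z \in [k]^n), \]
splitting into the cases $x_i=y_i=z_i$, $x_i=y_i\neq z_i$, and $x_i\neq y_i$, in which the right-hand side evaluates to $x_i$, $x_i$, and $z_i$ respectively (here it is essential that $x,y,z$ have no zero coordinate). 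Given $x,y,z\in{\rm dom}\,f\subseteq{\rm dom}\,g$, closure puts the right-hand side in ${\rm dom}\,g$, and since $\theta(x,y,z)\in[k]^n$ it lies in ${\rm dom}\,g\cap[k]^n={\rm dom}\,f$, as required.

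For sufficiency, assume $S:={\rm dom}\,f$ is closed under $\theta$. The relaxations of $f$ are precisely the feasible points of the linear system in the unknowns $\{g(z):z\in[0,k]^n\}$ given by all instances of (\ref{ineq:ksub}) together with the boundary conditions $g|_{[k]^n}=f$. I would single out the \emph{maximal} relaxation, the pointwise-largest feasible $g$: starting from $g=f$ on $[k]^n$ and $g=+\infty$ elsewhere, assign to each vector with zero coordinates the tightest upper bound forced by transitively chaining the inequalities (\ref{ineq:ksub}), which amounts to a shortest-path-type relaxation over the ``constraint graph'' on $[0,k]^n$ and accounts for the claimed $O((k^n)^2)$ running time. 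Any fixed point of this process satisfies every instance of (\ref{ineq:ksub}) and is therefore $k$-submodular; moreover the values on $[k]^n$ are never lowered, because a zero-free vector equals the meet or join of two vectors only when those two coincide with it, so $g$ indeed extends $f$. A cautionary point, which I verified on a small example, is that the naive value $\tfrac12\min\{f(y)+f(y'):y\sqcap y'=z\}$ over \emph{direct} witness pairs is \emph{not} $k$-submodular; the transitive propagation above is genuinely needed.

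The main obstacle is showing that this propagation stays bounded below, i.e.\ that no value is driven to $-\infty$; equivalently, that the linear system is feasible. This is exactly where $\theta$-closure of $S$ must be used. I would argue by contradiction: an unbounded descent (a ``negative cycle'' in the constraint graph, or, dually, a Farkas certificate of infeasibility) would produce a finite family of vectors of $S$ related by $\sqcap$ and $\sqcup$ whose combination, read through the identity $\theta(x,y,z)=((x\sqcap y)\sqcup z)\sqcup(x\sqcap y)$, exhibits some $x,y,z\in S$ with $\theta(x,y,z)\notin S$, contradicting the hypothesis. Establishing this correspondence between infeasibility certificates and failures of $\theta$-closure --- and thereby proving that $\theta$-closure guarantees a finite, $k$-submodular fixed point --- is the technical heart of the argument; the half-integrality of the output should then fall out of the two-vector structure of the tight constraints, as in the one-dimensional case where $g(0)$ is forced to equal half the sum of the two smallest values of $f$.
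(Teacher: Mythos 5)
Your ``only if'' direction is correct and coincides with the paper's: the domain of a $k$-submodular $g$ is closed under $\sqcap$ and $\sqcup$ by the infinity convention, and your identity $\theta(x,y,z)=((x\sqcap y)\sqcup z)\sqcup(x\sqcap y)$ on $[k]^n$ is the paper's identity (\ref{eq:theta}), since $x\sqcap y=x\sqcup y$ for zero-free $x,y$; your coordinatewise verification is fine. The ``if'' direction, however, has a genuine gap: you defer as ``the technical heart'' precisely the statement that carries the theorem, and the route you sketch for it is aimed at the wrong target. What must be extracted from $\theta$-closure of $S={\rm dom}\,f$ is a purely domain-theoretic fact: the closure of $S$ under $\sqcap$ and $\sqcup$ meets $[k]^n$ in $S$ alone (otherwise some point of $[k]^n$ with $f=+\infty$ is forced into ${\rm dom}\,g$, which is the real obstruction --- equivalently, a join $x\sqcup y$ escaping the domain makes the bound on $g(x\sqcap y)$ equal $-\infty$). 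This is the content of Lemmas \ref{lem:sqcap} and \ref{lem:theta} in the paper, and Lemma \ref{lem:sqcap} --- that $C_{\sqcap}(C_{\theta}(A))$ is already closed under $\sqcup$ --- needs a nontrivial inductive construction of witnesses via $\theta$ together with a maximal-zero-count argument. Nothing in your sketch supplies this, and no Farkas or negative-cycle correspondence substitutes for it.

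Indeed, framing the difficulty as value-feasibility misses the structural fact that makes the construction work: in every nontrivial instance of (\ref{ineq:ksub}) the meet $x\sqcap y$ has strictly more zero coordinates than each of $x$, $y$, and (unless it equals the join) $x\sqcup y$. Ordering the unknowns by zero count therefore makes the system \emph{triangular} --- each inequality is an upper bound on the unique variable of largest zero count in terms of variables with strictly fewer zeros --- so once the domain is right there is no descent to $-\infty$ to rule out at all: any finite values of $f$ are feasible, and the greedy minimum assignment is automatically $k$-submodular. This is exactly the Fourier--Motzkin argument in Proposition \ref{prop:chara}, where every inequality containing the eliminated variable has it with positive coefficient. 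Without this ordering, your two unproved claims (``any fixed point of the propagation satisfies every instance'' and ``values on $[k]^n$ are never lowered'') stay unsubstantiated, and your stated reason for the second is false: a zero-free vector \emph{can} be the join of two distinct vectors, e.g.\ $(1,2)=(1,0)\sqcup(0,2)$, so inequalities such as $g(0,0)+g(1,2)\le g(1,0)+g(0,2)$ do constrain values on $[k]^n$ --- this is the very error in the Gridchyn--Kolmogorov claim that the paper's footnote corrects. The triangularity is what guarantees such constraints are discharged by the later choice of the meet value, never by lowering $f$.
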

Therefore the class of $k$-submodular extendable functions is defined by a polymorphism $\theta$, and hence is closed under expressive power (see \cite{book/Zivny12}). Also the $k$-submodular extendability depends only on the domain of $f$, i.e., the labelings attaining finite value.
In particular, if ${\rm dom}\ f$ is the whole set $[k]^n$ (i.e., $f : [k]^n \rightarrow \mathbb{R}$), then $f$ always has a $k$-submodular relaxation; this fact has been noticed by Gridchyn and Kolmogorov \cite[p. 2325]{ICCV/GK13}, but their proof is not correct.\footnotemark[2]
\footnotetext[2]{They claimed that a $k$-submodular relaxation $g$ of arbitrary $f$ is obtained by setting $g(x) = f(x)$ for $x \in [k]^n$ and $g(x) = C$ for $x \in [0, k]^n \setminus [k]^n$, where $C \leq \min_{x \in [k]^n} f(x)$.
This is not true. Indeed, consider $f : [2]^2 \rightarrow \mathbb{R}$ such that $f(1, 2) := 1$ and $f(x) := 0$ for other $x \in [2]^2$.
Let $g : [0, 2]^2 \rightarrow \mathbb{R}$ be defined by $g(1,2) := 1$ and $g(x) := 0$ for other $x \in [0, 2]^2$.
Then $g$ is not $k$-submodular since $1 = g(0, 0) + g(1, 2) > g(1, 0) + g(0, 2) = 0$.}

Based on Theorem~\ref{thm:chara},
we can obviously test for the existence of a $k$-submodular relaxation in $O((k^n)^3)$ time by going through all labelings $x,y,z \in \textrm{dom }f$ and testing for the closure under $\theta$.
However this method cannot find a $k$-submodular relaxation even if it exists.
We will present a combinatorial  $O((k^n)^2)$-time algorithm to find a $k$-submodular relaxation.
Our algorithm reveals interesting and unexpected properties of the space of $k$-submodular relaxations:
the existence of a half-integral $k$-submodular relaxation and the existence of a unique maximal $k$-submodular relaxation in the case of $n = 2$.

\begin{thm}\label{thm:algo}
There exists an $O\left((k^n)^2\right)$-time algorithm to determine whether a function $f : [k]^n \rightarrow \overline{\mathbb{R}}$ has a $k$-submodular relaxation, and to construct a $k$-submodular relaxation $g$ if it exists, where $g$ has the following properties:
\begin{enumerate}
\item If $f$ is integer valued, then $g$ is half-integer-valued.
\item If $n = 2$, then for every $k$-submodular relaxation $g^\prime$ of $f$ it holds that
\[ g(x) \geq g^\prime(x) \quad (x \in {\rm dom}\ g).\]
Namely $g$ is the unique maximal $k$-submodular relaxation of $f$.
\end{enumerate}
\end{thm}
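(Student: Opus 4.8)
The plan is to first determine the largest value a relaxation can possibly take at each point, and then to show that this ceiling is attained by an actual $k$-submodular function (exactly, for $n=2$) or can be lowered to one without leaving the half-integral lattice (for general $n$).

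The starting point is an upper bound forced by (\ref{ineq:ksub}). For $y,z \in [k]^n$ every coordinate is nonzero, so from the definitions $y \sqcap z = y \sqcup z$; calling this common value $w$, inequality (\ref{ineq:ksub}) applied to $y,z$ together with $g(y)=f(y)$, $g(z)=f(z)$ gives $g(w) \le \frac{1}{2}(f(y)+f(z))$ for every relaxation $g$. I would therefore define $\hat g(x) := \min\{\frac{1}{2}(f(y)+f(z)) : y,z \in [k]^n,\ y \sqcap z = x\}$ (with value $+\infty$ when no such pair lies in ${\rm dom}\ f$). By the computation just made, every relaxation is dominated by $\hat g$; and since for $x \in [k]^n$ the only admissible pair is $(x,x)$, we have $\hat g|_{[k]^n} = f$. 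Two conclusions are immediate: if $f$ is integer valued then $\hat g$ is half-integer valued, and $\hat g$ is computed by a single sweep that, for each of the $(k^n)^2$ ordered pairs $(y,z)$, updates $\hat g(y \sqcap z)$ — i.e.\ in $O((k^n)^2)$ time.

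For $n=2$ I would prove that $\hat g$ is itself $k$-submodular precisely when ${\rm dom}\ f$ is closed under $\theta$, which by Theorem~\ref{thm:chara} is precisely when a relaxation exists. Since $\hat g$ dominates every relaxation, this identifies $\hat g$ as the unique maximal relaxation and establishes both listed properties for $n=2$ simultaneously; moreover, because $[0,k]^2$ has only $(k+1)^2 = O(k^2)$ points, verifying (\ref{ineq:ksub}) for $\hat g$ over all pairs costs $O(k^4)=O((k^n)^2)$, so this doubles as the existence test. The verification itself is the main obstacle: writing $\hat g(x)=\frac{1}{2}(f(a)+f(b))$ and $\hat g(x')=\frac{1}{2}(f(c)+f(d))$ for minimizing pairs, I must exhibit admissible witness pairs for $\hat g(x \sqcap x')$ and $\hat g(x \sqcup x')$ whose total $f$-value does not exceed $f(a)+f(b)+f(c)+f(d)$; here $\theta$ is the tool that recombines $a,b,c,d$ into the needed witnesses and closure of ${\rm dom}\ f$ under $\theta$ keeps those witnesses feasible. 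This two-dimensional case analysis is exactly what fails for $n \ge 3$, where $\hat g$ need not be $k$-submodular and a unique maximal relaxation need not exist.

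For general $n$ the algorithm must therefore output a genuine relaxation lying below $\hat g$, while still being half-integral and computable in $O((k^n)^2)$ time. I would realize $g$ as the optimum of the linear program ``maximize the value at each point subject to (\ref{ineq:ksub}) and $g|_{[k]^n}=f$'', equivalently the tightest upper bound on each $g(w)$ derivable by chaining meet/join inequalities grounded in the boundary data $f$; feasibility of this program is equivalent, by Theorem~\ref{thm:chara}, to $\theta$-closure of ${\rm dom}\ f$, which gives the existence test in the general case. Half-integrality I would obtain not from $\hat g$ directly but by an uncrossing/averaging argument — any optimal relaxation that is not half-integral is averaged with a shifted copy to produce a half-integral one of the same value, which is the structural source of the factors $\frac{1}{2}$. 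The principal remaining obstacle is algorithmic: organizing this propagation so that it terminates in $O((k^n)^2)$ time rather than the naive bound that comes from iterating over all pairs of $[0,k]^n$; exploiting the rigid structure of $\theta$ (which makes $\theta(x,y,z)$ depend on $z$ only through the coordinates where $x$ and $y$ disagree) to restrict attention to a small family of binding inequalities is what I expect to make this possible.
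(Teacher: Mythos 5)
Your proposal has a genuine gap, and its centerpiece for the $n=2$ case is in fact false. The upper bound $\hat g(x) := \min\{\frac{1}{2}(f(y)+f(z)) : y,z \in [k]^n,\ y \sqcap z = x\}$ does dominate every relaxation, but it is \emph{not} the maximal relaxation, because you have only imposed the $k$-submodular inequalities in which both arguments lie in $[k]^n$. The inequalities $g(x \sqcap y) \le g(x) + g(y) - g(x \sqcup y)$ in which $x,y$ themselves contain zeros and the join appears with a \emph{negative} coefficient can force values strictly below $\hat g$, even for $n=2$. Concretely, take $k=3$, $n=2$, $f \equiv 0$ except $f(3,3) = M > 0$; here ${\rm dom}\ f = [3]^2$ is trivially closed under $\theta$, so a relaxation exists by Theorem~\ref{thm:chara}. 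Then $\hat g(3,0) = \frac{1}{2}(f(3,1)+f(3,2)) = 0$, $\hat g(0,3) = 0$, and $\hat g(0,0) = \frac{1}{2}(f(1,1)+f(2,2)) = 0$; but applying (\ref{ineq:ksub}) to the pair $(3,0), (0,3)$, whose meet is $(0,0)$ and whose join is $(3,3)$, every relaxation $g'$ must satisfy $g'(0,0) \le g'(3,0) + g'(0,3) - g'(3,3) \le 0 + 0 - M = -M$, while $\hat g(3,0) + \hat g(0,3) = 0 < M = \hat g(0,0) + \hat g(3,3)$ shows $\hat g$ itself is not $k$-submodular. So your claim that, for $n=2$, $\hat g$ is $k$-submodular precisely when ${\rm dom}\ f$ is $\theta$-closed fails for all $k \ge 3$ (the algebraic coincidence you are implicitly relying on does hold for $k=2$, where the mixed bounds reproduce the candidates $\frac{1}{2}(f(a,d)+f(b,c))$, which may be why the claim looks plausible). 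The actual maximal relaxation must incorporate the mixed constraints — this is exactly why the paper's proof of property 2 (Proposition~\ref{prop:opt}) has to handle Cases $2'$ and $3'$, in which $g(0,0)$ is defined via $g(x,0)+g(0,y)-g(x,y)$ rather than via a half-sum over $[k]^2$.

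For general $n$, what remains of your plan is a sketch that omits the paper's central mechanism. The paper's algorithm is a greedy Fourier--Motzkin scheme processing points in increasing order of the number of zeros $Z(z)$, taking at each point the minimum of \emph{all} bounds (\ref{ineq:z}), including the non-averaging ones; its correctness rests on the structural fact (proof of Proposition~\ref{prop:chara}) that when variables are eliminated in decreasing-$Z$ order, the eliminated variable never appears with a negative coefficient, so elimination never combines inequalities and the greedy choice is feasible. Your LP formulation does not identify this, and your ``uncrossing/averaging with a shifted copy'' argument for half-integrality is not a proof and does not obviously work: half-integrality in the paper comes instead from a delicate induction (Proposition~\ref{prop:half}) showing that each computed value is either a half-sum of two values of $f$ or an integer combination $g(x)+g(y)-g(x \sqcup y)$ of previously computed values — and that same proposition (ruling out the averaging case with an argument off ${\rm dom}\ f$) is also an essential input to the $n=2$ maximality proof. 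Finally, you explicitly leave the $O((k^n)^2)$ running-time organization as an open obstacle, whereas it is immediate for the paper's algorithm, which examines each pair $x,y$ once.
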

In particular, our algorithm outputs a half-integral and optimal $k$-submodular relaxation if $n = 2$. This solves, in the special case of binary $k$-submodular relaxations, a question raised by~\cite{SICOMP/IWY16}: {\rm Is there a way to decide the existence of discrete relaxations in general?}

The rest of this paper is organized as follows.
In Section \ref{sec:app}, we present applications of our algorithm to valued constraint satisfaction problems (VCSPs), where we utilize a recent remarkable result by Thapper and \v{Z}ivn\'{y} \cite{FOCS/TZ12} that $k$-submodular VCSPs can be solved in polynomial time. (The oracle tractability of $k$-submodular function minimization is one of the prominent open problems in the literature; see \cite{SIMDA/FT14, ISCO/HK12}.)
As a consequence of properties 1 and 2 in Theorem \ref{thm:algo}, our algorithm always constructs a half-integral $k$-submodular relaxation for integer-valued VCSPs, and an ``almost best'' $k$-submodular relaxation for binary VCSPs.
We also present an application to the maximization problem, where we utilize a recent result by Iwata, Tanigawa, and Yoshida \cite{SODA/ITY15} on $k$-submodular function maximization.
In Section \ref{sec:proof}, we prove Theorem \ref{thm:chara} and \ref{thm:algo}. Our algorithm is based on the Fourier--Motzkin elimination scheme for linear inequalities. We show that the system of $k$-submodular inequalities has a certain nice elimination ordering, and the Fourier--Motzkin elimination can be greedily carried out.

\section{Application}\label{sec:app}
Our algorithm is useful in {\em VCSPs}. Let us introduce VCSPs briefly; see \cite{book/Zivny12} for detail. Let $D$ be a finite set, called a {\em domain}.
By a {\em cost function} on $D$ we mean a function $f : D^r \rightarrow \overline{\mathbb{R}}$ for some natural number $r = r_f$, called the {\em arity} of $f$.
A set of cost functions is called a {\em language} on $D$.
For a language $\mathcal{L}$, a pair $(f, \sigma)$ of $f \in \mathcal{L}$ and $\sigma : \{1, 2, \dots, r_f\} \rightarrow \{1, 2, \dots, n\}$ is called a {\em constraint} on $\mathcal{L}$.
An {\em instance} of VCSP over language $\mathcal{L}$, denoted by ${\rm VCSP}(\mathcal{L})$, is a triple $I = (n, D, \mathcal{C})$ of the number $n$ of variables, domain $D$, and a finite set $\mathcal{C}$ of constraints on $\mathcal{L}$.
The task of ${\rm VCSP}(\mathcal{L})$ is to find $x = (x_1,\dots, x_n) \in D^n$ that minimizes
\[ f_I(x) := \sum_{(f, \sigma) \in \mathcal{C}} f(x_{\sigma(1)}, \dots, x_{\sigma(r_f)}). \]
Let ${\rm OPT}(I) := \min_x f_I(x)$.

In the case where $D = [0, k]$ and $\mathcal{L}$ consists of $k$-submodular functions, we call ${\rm VCSP}(\mathcal{L})$ a {\em $k$-submodular VCSP}. Thapper and \v{Z}ivn\'y \cite{FOCS/TZ12} proved the polynomial solvability of $k$-submodular VCSPs (see \cite{SICOMP/KTZ15} for the journal version).
\begin{thm}[see \cite{SICOMP/KTZ15, FOCS/TZ12}]\label{polytime}
$k$-submodular VCSPs can be solved in polynomial time.
\end{thm}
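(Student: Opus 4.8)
Since Theorem~\ref{polytime} is quoted from \cite{FOCS/TZ12, SICOMP/KTZ15}, the natural plan is to reconstruct the argument through the \emph{basic linear programming relaxation} (BLP) of a VCSP, which is the engine behind their general tractability result. First I would write down, for an instance $I = (n, [0,k], \mathcal{C})$, the BLP relaxation: introduce a variable $\mu_{(f,\sigma)}$ ranging over probability distributions on the assignments to the scope of each constraint $(f,\sigma) \in \mathcal{C}$, together with marginal variables $\lambda_i$ that are probability distributions on $[0,k]$ for each of the $n$ variables, and impose the usual consistency (marginalization) constraints linking each $\mu_{(f,\sigma)}$ to the relevant $\lambda_i$. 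The objective is the expected total cost $\sum_{(f,\sigma)} \langle \mu_{(f,\sigma)}, f \rangle$. This LP has size polynomial in the instance (for fixed $k$), so it can be solved in polynomial time by any standard LP method; the whole difficulty lies in showing that its optimum coincides with ${\rm OPT}(I)$.

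The crux is therefore to certify that BLP is \emph{tight} for $k$-submodular languages. For this I would invoke the characterization of \cite{FOCS/TZ12, SICOMP/KTZ15}: BLP solves ${\rm VCSP}(\mathcal{L})$ exactly whenever $\mathcal{L}$ admits \emph{symmetric fractional polymorphisms of all arities}. The defining inequality (\ref{ineq:ksub}) already supplies the arity-$2$ instance essentially for free: the pair $(\sqcap, \sqcup)$, each taken with weight $1/2$, is a binary fractional polymorphism, because (\ref{ineq:ksub}) is precisely $\tfrac12 f(x\sqcap y) + \tfrac12 f(x \sqcup y) \le \tfrac12 f(x) + \tfrac12 f(y)$, and it is symmetric since both $\sqcap$ and $\sqcup$ are commutative (and it respects ${\rm dom}\ f$, as forced by the convention on $+\infty$). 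The remaining task is to promote this binary fractional polymorphism to symmetric fractional polymorphisms of every arity $m$, i.e.\ to exhibit, for each $m$, a probability distribution over symmetric $m$-ary operations on $[0,k]$ whose expectation never increases $\tfrac1m\sum_i f(x^i)$ on a $k$-submodular $f$.

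I expect this last step --- the generation of the higher-arity symmetric fractional polymorphisms --- to be the main obstacle, since it is exactly the place where the special structure of $k$-submodularity (rather than a generic binary fractional polymorphism) must be exploited. Concretely I would try to build the $m$-ary operations by iterating and averaging $\sqcap$ and $\sqcup$ in a symmetric fashion and verify the fractional-polymorphism inequality by a telescoping argument that applies (\ref{ineq:ksub}) repeatedly; alternatively one can appeal directly to the results in the Thapper--\v{Z}ivn\'y line of work guaranteeing that, for such languages, a suitable binary symmetric fractional polymorphism already entails those of all arities. Once the all-arity condition is in hand, tightness of BLP follows from their theorem, and combining it with polynomial-time LP solving yields Theorem~\ref{polytime}. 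I would also stress that a purely combinatorial, oracle-based proof is \emph{not} available at present: as noted in the introduction, the oracle tractability of $k$-submodular minimization remains open, which is precisely why the argument must route through the LP relaxation rather than through a direct minimization algorithm.
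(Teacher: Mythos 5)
The paper does not actually prove Theorem~\ref{polytime}: it is imported verbatim from \cite{FOCS/TZ12, SICOMP/KTZ15}, so there is no internal proof to compare against, and the right question is whether your reconstruction matches the argument in the cited works --- it essentially does. The engine is indeed the basic LP relaxation together with the Thapper--\v{Z}ivn\'y characterization via symmetric fractional polymorphisms, and you correctly observe that the $k$-submodular inequality (\ref{ineq:ksub}) is exactly the statement that $\frac{1}{2}\sqcap + \frac{1}{2}\sqcup$ is a binary symmetric fractional polymorphism (both operations being commutative, and $+\infty$ handled by the stated convention). One refinement to your hedged last step: for \emph{general-valued} languages (which is the relevant setting here, since cost functions may take $+\infty$), it is not known that a binary symmetric fractional polymorphism by itself entails symmetric fractional polymorphisms of all arities --- that shortcut is available for finite-valued languages. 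The form of the sufficient condition actually used in \cite{SICOMP/KTZ15} is that the \emph{support} of some fractional polymorphism generates a symmetric operation of every arity $m \geq 2$; this holds here for free because $\sqcap$ is associative, commutative, and idempotent, so its $m$-fold iteration is already a symmetric $m$-ary operation in the clone generated by $\{\sqcap, \sqcup\}$. With that substitution your plan goes through; this is precisely how the cited papers treat $k$-submodularity, so no bespoke telescoping construction is needed. Two minor points: tightness of the BLP yields the optimal \emph{value}, and recovering an optimal assignment uses the standard self-reducibility step of fixing variables one at a time and re-solving; and your closing remark is apt --- the oracle tractability of $k$-submodular minimization is open (as the paper notes in Section~1), so the LP route is currently unavoidable and the VCSP (sum-of-small-arity) representation is essential.
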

A {\em $k$-submodular relaxation} of an instance $I = (n, [k], \mathcal{C})$ is an instance $I^\prime = (n, [0, k], \mathcal{C}^\prime)$ such that $\mathcal{C}^\prime$ is obtained by replacing each cost function in $\mathcal{C}$ with its $k$-submodular relaxation. Notice that $f_{I^\prime}$ is a $k$-submodular relaxation of $f_I$.

\paragraph{$k$-submodular autarky.}
From a minimizer of a $k$-submodular relaxation, we obtain an {\em autarky}, a partial assignment of variables that keeps OPT, on the basis of the following property (called {\em persistency}).
\begin{thm}[see \cite{ICCV/GK13, SICOMP/IWY16}]\label{persistency}
Let $f$ be a function on $[k]^n$ and $g$ a $k$-submodular relaxation of $f$.
For any minimizer $y^* \in [0,k]^n$ of $g$,
there exists a minimizer $x^* \in [k]^n$ of $f$
such that $x^*_i = y^*_i$ for all $i$ with $y^*_i \neq 0$. 
\end{thm}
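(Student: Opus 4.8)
The plan is to exhibit an explicit minimizer of $f$ that agrees with $y^*$ on its support, built from $y^*$ and any minimizer of $f$ by two applications of the $k$-submodular inequality. Write $S := \{ i : y^*_i \neq 0 \}$ and assume $\min f < +\infty$ (otherwise every point of $[k]^n$ is a minimizer of $f$ and any completion of $y^*$ works). Fix any minimizer $x' \in [k]^n$ of $f$, and define the \emph{overwrite} $\hat{x} \in [k]^n$ by $\hat{x}_i := y^*_i$ for $i \in S$ and $\hat{x}_i := x'_i$ for $i \notin S$. Since $x' \in [k]^n$ and $y^*_i \neq 0$ on $S$, we indeed have $\hat{x} \in [k]^n$, and $\hat{x}$ agrees with $y^*$ on $S$ by construction. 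As $g$ restricts to $f$ on $[k]^n$, it then suffices to prove $g(\hat{x}) \le g(x') = \min f$, which forces $f(\hat{x}) = \min f$, so that $\hat{x}$ is the desired minimizer $x^*$.

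First I would apply the $k$-submodular inequality (\ref{ineq:ksub}) to the pair $(y^*, x')$. Since $y^*$ is a global minimizer of $g$, every point has $g$-value at least $g(y^*)$; applying this to the meet term $y^* \sqcap x'$ discards it, and (\ref{ineq:ksub}) collapses to $g(y^* \sqcup x') \le g(x')$. Set $p := y^* \sqcup x'$; a coordinatewise check shows that $p$ equals $x'_i$ for $i \notin S$, equals $y^*_i$ for $i \in S$ with $x'_i = y^*_i$, and equals $0$ for $i \in S$ with $x'_i \neq y^*_i$. I would then apply (\ref{ineq:ksub}) once more to the pair $(p, y^*)$, again discarding the meet term $p \sqcap y^*$ by the global minimality of $y^*$, to obtain $g(p \sqcup y^*) \le g(p)$. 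The crux is that $p \sqcup y^* = \hat{x}$: the second join refills exactly the coordinates of $S$ that the first join had zeroed out, restoring the label $y^*_i$ there while keeping the coordinates outside $S$ equal to $x'_i$. Chaining the two bounds gives $g(\hat{x}) \le g(p) \le g(x') = \min f$, as required.

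The conceptual obstacle, and the reason one application of $k$-submodularity does not suffice, is that the natural combination $y^* \sqcup x'$ need not lie in $[k]^n$: on the coordinates of $S$ where $x'$ disagrees with $y^*$, the join inserts a $0$, so $p$ is only a partial assignment. The idea that makes the proof work is that joining $p$ a second time with $y^*$ repairs precisely these zeroed coordinates, while the global minimality of $y^*$ lets both meet terms be dropped at no cost. Accordingly, the step I expect to require the most care is the purely coordinatewise verification that $p \sqcup y^* = \hat{x}$, carried out by splitting the indices into three cases ($i \notin S$; $i \in S$ with $x'_i = y^*_i$; and $i \in S$ with $x'_i \neq y^*_i$) and reading off the definitions of $\sqcap$ and $\sqcup$; the handling of the value $+\infty$ in (\ref{ineq:ksub}) is routine but I would state it explicitly.
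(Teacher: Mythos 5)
Your proof is correct: the degenerate case $\min f = +\infty$ is handled, the finiteness needed to cancel terms in (\ref{ineq:ksub}) follows from $g(y^*) \leq g(x') = \min f < +\infty$, and the coordinatewise verification that $p := y^* \sqcup x'$ and then $p \sqcup y^* = \hat{x}$ is accurate in all three index cases, so the chain $g(\hat{x}) \leq g(p) \leq g(x') = \min f$ forces $\hat{x}$ to be the desired minimizer. Note that the paper itself gives no proof of Theorem~\ref{persistency} --- it imports the result from \cite{ICCV/GK13, SICOMP/IWY16} --- and your double-join argument (drop the meet term twice by global minimality of $y^*$, with the second join repairing exactly the coordinates of $S$ zeroed out by the first) is essentially the standard persistency proof from those references, so there is nothing in the paper to contrast it with.
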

By our algorithm, for an instance $I = (n, [k], \mathcal{C})$, we can construct a $k$-submodular relaxation $I^\prime$, if it exists, in $O(|\mathcal{C}|(k^r)^2)$ time, where $r$ is the maximum arity of a function in $\mathcal{C}$. This is a polynomial time algorithm in VCSPs.
By Theorem \ref{polytime}, we obtain an optimal solution $y^*$ of $I^\prime$ in polynomial time.
By Theorem \ref{persistency}, in solving $I$, we can fix $x_i$ to $y^*_i$ for all $i$ with $y^*_i \neq 0$. This contributes to reducing the size of VCSP.

\paragraph{FPT algorithm.}
Iwata, Wahlstr{\"o}m, and Yoshida \cite{SICOMP/IWY16} present an application of $k$-submodular relaxation for FPT algorithms.
Suppose that $\mathcal{L}$ consists of integer-valued cost functions.
For an instance $I = (n, [k], \mathcal{C})$ of ${\rm VCSP}(\mathcal{L})$ and a $k$-submodular relaxation $I^\prime = (n, [0, k], \mathcal{C}^\prime)$ of $I$, the {\em scaling factor} of $I^\prime$ is the smallest integer $c$ such that $c\cdot g$ is integer valued for all $g \in \mathcal{C}^\prime$.
Let $d := {\rm OPT}(I) - {\rm OPT}(I^\prime)$.
Then we can solve an instance $I$ in polynomial time, provided $k^{cd}$ is fixed.
\begin{thm}[{see\footnotemark[3] \cite[Lemma 1]{SICOMP/IWY16}}]\label{FPT}
Let $\mathcal{L}$ be a language on $[k]$ consisting of integer-valued cost functions.
Let $I$ be an instance of ${\rm VCSP}(\mathcal{L})$, and $I^\prime$ a $k$-submodular relaxation of $I$ with the scaling factor $c$.
Let $d := {\rm OPT}(I) - {\rm OPT}(I^\prime)$.
We can solve the instance $I$ by solving a $k$-submodular VCSP at most $k^{cd}$ times.
\end{thm}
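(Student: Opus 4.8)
The plan is to solve $I$ by a recursive branch-and-bound procedure that repeatedly exploits the persistency property (Theorem~\ref{persistency}) and the polynomial solvability of $k$-submodular VCSPs (Theorem~\ref{polytime}), and to bound the number of recursive calls by $k^{cd}$ through a potential argument based on the scaled relaxation gap. At a generic node we are given a subinstance $J$ of $I$ obtained by fixing some variables to values in $[k]$; its $k$-submodular relaxation $J'$ is the corresponding subinstance of $I'$. First I would solve $J'$ in polynomial time and obtain a minimizer $y^*$. By Theorem~\ref{persistency} there is a minimizer of $J$ agreeing with $y^*$ on every coordinate $i$ with $y^*_i \neq 0$, so we may fix all those coordinates without losing optimality. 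If the resulting $y^*$ is integral, i.e.\ lies in $[k]^n$, then it is an optimal solution of $J$ and the node is a leaf. Otherwise we select a coordinate $i$ with $y^*_i = 0$ and branch into the $k$ subinstances obtained by additionally forcing $x_i = a$ for $a \in [k]$, recursing on each.

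For the complexity bound I would use the potential $\Phi(J) := c\,({\rm OPT}(J) - {\rm OPT}(J'))$. Since $f$ is integer valued, ${\rm OPT}(J)$ is an integer, while the scaling factor $c$ makes $c \cdot f_{J'}$ integer valued, so $c\,{\rm OPT}(J')$ is an integer; as $J'$ relaxes $J$ we have ${\rm OPT}(J') \le {\rm OPT}(J)$, hence $\Phi(J)$ is a non-negative integer with $\Phi(I) = cd$. The heart of the argument is to show that along every branch that can still lead to a global optimum the potential strictly decreases, i.e.\ $\Phi(J_a) \le \Phi(J) - 1$ where $J_a$ fixes $x_i = a$: fixing $i$ to a value $a$ consistent with an optimum of $J$ keeps ${\rm OPT}(J_a) = {\rm OPT}(J)$, while forcing $x_i$ away from $0$ raises the relaxation optimum, and integrality after scaling by $c$ turns this into an increase of at least $1$ in $c\,{\rm OPT}(J_a')$. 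Branches whose fixing is inconsistent with any optimum only increase ${\rm OPT}(J_a)$, and are pruned by comparing the lower bound ${\rm OPT}(J_a')$ against the best integral solution found so far, so they do not contribute to the count. Consequently every root-to-leaf path contains at most $cd$ strict decreases of $\Phi$, and since each node branches into at most $k$ children, the recursion tree has at most $k^{cd}$ leaves, each requiring a single $k$-submodular VCSP solve.

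The main obstacle is precisely the strict-decrease claim for $\Phi$. It is delicate because forcing a zero coordinate to a nonzero value need not by itself raise the relaxation optimum: the set of minimizers of a $k$-submodular function is closed under $\sqcap$ and $\sqcup$ and may contain minimizers disagreeing on that coordinate, so some value $a$ keeps ${\rm OPT}(J_a') = {\rm OPT}(J')$. In such cases the progress is instead that the branch reveals further nonzero coordinates to be fixed by persistency, and one must argue that these ``free'' steps cannot accumulate beyond the budget allotted by the gap. Handling this cleanly requires combining the scaled-gap potential with the structure of the minimizer set and processing the recursion in best-first order of ${\rm OPT}(J')$, so that no node with relaxation optimum exceeding ${\rm OPT}(I)$ is expanded before an optimal integral solution is found; this best-first pruning is what guarantees that the potential genuinely controls the size of the explored tree and yields the bound of $k^{cd}$ $k$-submodular VCSP solves.
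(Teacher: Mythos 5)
You should first note that the paper itself contains no proof of this theorem: it is imported verbatim from \cite[Lemma 1]{SICOMP/IWY16}, and the authors' only contribution is footnote 3, checking that the proof there never uses the extra condition $\min g = \min f$ present in the definition of \cite{SICOMP/IWY16}, so it carries over to their weaker notion of relaxation. The benchmark is therefore the branching argument of Iwata--Wahlstr\"om--Yoshida, and your proposal correctly reconstructs its skeleton: solve the relaxation (Theorem \ref{polytime}), fix the nonzero coordinates by persistency (Theorem \ref{persistency}), branch $k$-ways on a remaining zero coordinate, and bound the branching depth by $cd$ using the fact that $c$ times the relaxation value is integral. But the one step on which the whole bound hinges --- that in every branch the relaxation optimum strictly increases, by at least $1/c$ --- is exactly the step you leave unproven, as you yourself concede, and the substitute mechanisms you offer do not close the gap. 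Best-first pruning does nothing against a zero-increase branch: a child $J_a$ with ${\rm OPT}(J_a') = {\rm OPT}(J')$ has a lower bound not exceeding ${\rm OPT}(I)$, so the incumbent never prunes it, and it leaves your potential $\Phi$ unchanged; consequently root-to-leaf paths are bounded only by $n + cd$ and the explored tree by roughly $k^{n+cd}$, not $k^{cd}$. Moreover your claim that inconsistent branches ``do not contribute to the count'' is false on the theorem's own terms: to obtain the bound used for pruning a child you must solve its $k$-submodular relaxation, and those solves are precisely what is being counted.

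The missing idea, which repairs the argument and is the actual content of the cited proof, is that zero-increase branches can be \emph{committed to} rather than branched on. If forcing $x_i = a$ does not raise the relaxation optimum, then a minimizer $w$ of the forced relaxation is also a minimizer of $J'$ with $w_i = a \neq 0$, so Theorem \ref{persistency} applied to $w$ shows that some optimal solution of $J$ has $x_i = a$; one may therefore fix $x_i := a$ outright and discard the other $k-1$ values without loss of optimality, at no cost to the gap. Genuine $k$-way branching is thus needed only at nodes where \emph{all} $k$ children strictly raise the relaxation optimum, and since $c \cdot f_{J'}$ is integer valued, each such raise is by at least $1/c$. Along the surviving path consistent with a global optimum the relaxation optimum never exceeds ${\rm OPT}(I)$, so at most $cd$ genuine branchings occur on any such path, giving at most $k^{cd}$ leaves, each handled by one $k$-submodular solve. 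Your formulation that the ``free'' steps ``cannot accumulate beyond the budget allotted by the gap'' is not merely unproven but wrong as stated: free steps consume none of the gap budget and are bounded only by $n$; it is the WLOG-fixing observation above, not the potential $\Phi$, that neutralizes them.
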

\footnotetext[3]{Note that our definition of $k$-submodular relaxation is slightly different from the one given by \cite{SICOMP/IWY16}, where the definition in \cite{SICOMP/IWY16} requires one more condition $\min g = \min f$. Theorem \ref{FPT} holds in our setting since the proof does not use this condition.}
Our algorithm constructs a $k$-submodular relaxation $I'$ with $c = 1$ or $2$, though we do not say anything about the magnitude of $d$ in general.
In the binary case ($r = 2$) that includes many important VCSPs, our relaxation is an almost best $k$-submodular relaxation in the following sense:
Our relaxation has the smallest $d$ among all $k$-submodular relaxations.
Indeed, for any $k$-submodular relaxation $I^{\prime\prime}$, it holds that $f_{I^\prime}(x) \geq f_{I^{\prime\prime}}(x)$ by property 2 in Theorem~\ref{thm:algo}, and hence $d \leq {\rm OPT}(I) - {\rm OPT}(I^{\prime\prime})$.
Moreover our relaxation $I'$ has the smallest $cd$, except for the case where there exists another $k$-submodular relaxation $I''$ with scaling factor 1 and ${\rm OPT}(I)-{\rm OPT}(I'') < cd$.
In such a case, the obtained $cd$ is still a 2-approximation.

In FPT applications, $d^* = {\rm OPT}(I)$ is a more desirable parameter than $d = {\rm OPT}(I) - {\rm OPT}(I')$, since $d^*$ depends only on input $I$ (see \cite{SICOMP/IWY16}).
If ${\rm OPT}(I') \geq 0$, then $d \leq d^*$, and we can use $d^*$ as an FPT parameter.
This is in the case where each function in $I'$ is nonnegative valued.
This leads to the notion of a {\it nonnegative(-valued) $k$-submodular relaxation}.
For binary functions, our algorithm returns a nonnegative $k$-submodular relaxation if it exists.
This fact will be useful in design of FPT algorithms for binary VCSPs.
It should be noted that $k$-submodular relaxations for special binary functions (given in \cite[Section 4.1]{SICOMP/IWY16}), are the same as relaxations obtained by our algorithm.

\paragraph{Maximization.}
Nonnegative $k$-submodular relaxation also has a potential to provide a unified approach to maximization.
We here consider maximization of functions having no $+\infty$.
Following \cite{METR/ITY13,TALG/WZ15}, just recently, Iwata, Tanigawa, and Yoshida~\cite{SODA/ITY15} presented a $1/2$-approximation algorithm for nonnegative $k$-submodular function maximization.
\begin{thm}[see {\cite[Theorem 2.3]{SODA/ITY15}}]\label{Max}
There exists a polynomial time randomized $1/2$-approximation algorithm for maximizing nonnegative $k$-submodular functions (given by value oracle).
\end{thm}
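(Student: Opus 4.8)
The plan is to prove this by analyzing a randomized greedy algorithm that builds an assignment one coordinate at a time and comparing its expected value against the optimum through a telescoping argument. First I would extract from the defining inequality~(\ref{ineq:ksub}) the two structural facts that drive everything. \emph{Orthant submodularity}: if $x,y$ lie in a common orthant (the same nonzero support carrying the same labels), then $\sqcap$ and $\sqcup$ act as the lattice meet and join on that orthant, so the restriction of $f$ to each orthant is an ordinary submodular function. \emph{Pairwise monotonicity}: taking $x,y$ that differ only in coordinate $i$, with $x_i=a$, $y_i=b$ and $0\neq a\neq b\neq 0$, both $x\sqcap y$ and $x\sqcup y$ equal the vector $z$ obtained by setting that coordinate to $0$, so (\ref{ineq:ksub}) reads $f(x)+f(y)\geq 2f(z)$; equivalently, the two marginal gains of raising coordinate $i$ from $0$ to $a$ or to $b$ sum to a nonnegative number.

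Second, I would set up the algorithm. Process the coordinates $1,\dots,n$ in order, maintaining a current point $x^{(i)}\in[0,k]^n$ whose first $i$ coordinates are labeled by elements of $[k]$ and whose remaining coordinates are $0$. At step $i$, query the oracle to compute, for each $a\in[k]$, the marginal gain $\Delta_a$ obtained by setting coordinate $i$ of $x^{(i-1)}$ to $a$, and then assign coordinate $i$ a label drawn from a distribution biased toward the larger marginals. Each step costs $O(k)$ oracle evaluations and the sampling distribution is computed in $O(k)$ time, so the whole procedure runs in polynomial time.

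Third, the analysis. Let $o^{*}$ be an optimal point and define the hybrid $o^{(i)}$ to agree with $x^{(i)}$ on coordinates $\leq i$ and with $o^{*}$ on coordinates $>i$; thus $o^{(0)}=o^{*}$ and $o^{(n)}=x^{(n)}$. The heart of the proof is the per-step inequality
\[
\mathbb{E}\!\left[f(o^{(i-1)})-f(o^{(i)})\right]\;\leq\;\mathbb{E}\!\left[f(x^{(i)})-f(x^{(i-1)})\right],
\]
conditioned on the history up to step $i$. Summing over $i$ telescopes to $f(o^{*})-f(x^{(n)})\leq f(x^{(n)})-f(x^{(0)})$, and since $f$ is nonnegative we have $f(x^{(0)})\geq 0$, whence $\mathbb{E}[f(x^{(n)})]\geq \tfrac12 f(o^{*})=\tfrac12\,\mathrm{OPT}$.

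Finally, the main obstacle is exactly establishing this per-step inequality, and it is here that the randomization and the precise sampling distribution matter. When the algorithm labels coordinate $i$ with $a$ while $o^{*}$ carries a possibly different label $b$ (which may be $0$), the loss $f(o^{(i-1)})-f(o^{(i)})$ is the cost of changing that coordinate from $b$ to $a$ inside the hybrid; bounding it requires orthant submodularity to relate the change to marginals evaluated at $x^{(i-1)}$, together with pairwise monotonicity to handle the case $0\neq a\neq b\neq 0$. Because the algorithm commits to a single random label while $o^{*}$ could hold any label, the sampling probabilities must be tuned so that the expected loss is dominated by the expected gain simultaneously for every possible value of $b$; pushing the approximation constant from $1/3$, which is already attainable deterministically, down to $1/2$ is precisely what this tuning buys, and verifying the resulting inequality is the technical crux.
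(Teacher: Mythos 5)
Your skeleton is the right one, and in fact it is the skeleton of the only proof available: the paper itself does not prove Theorem~\ref{Max} but quotes it from \cite[Theorem 2.3]{SODA/ITY15}, so the relevant comparison is with that proof. Like you, Iwata, Tanigawa, and Yoshida decompose the defining inequality~(\ref{ineq:ksub}) into orthant submodularity plus pairwise monotonicity (both of your derivations of these facts are correct), run a randomized coordinate-by-coordinate greedy, and analyze it by the hybrid sequence $o^{(0)}=o^{*},\dots,o^{(n)}=x^{(n)}$ with the per-step inequality $\mathbb{E}[f(o^{(i-1)})-f(o^{(i)})]\leq \mathbb{E}[f(x^{(i)})-f(x^{(i-1)})]$, which telescopes to the factor $\frac12$ exactly as you wrote. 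One simplification you missed: by the observation recorded in the paper immediately after Theorem~\ref{Max} (the maximum of a $k$-submodular function is attained on $[k]^n$), you may assume $o^{*}\in[k]^n$, so the case ``$b=0$'' in your last paragraph never needs to be handled.

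The genuine gap is the one you flag yourself and then do not close: the sampling distribution is never specified, and the per-step inequality is never verified, yet the entire content of the theorem lives there. ``A distribution biased toward the larger marginals'' does not determine an algorithm, and the conclusion is sensitive to the choice: the deterministic argmax rule (the extreme bias) is only known to give $\frac13$ for general $k$, so nothing you have written implies the factor $\frac12$. What is missing is (i) an explicit distribution $p_a$ over labels, given as a concrete function of the marginal vector $(\Delta_1,\dots,\Delta_k)$ at the current point (pairwise monotonicity, which guarantees $\Delta_a+\Delta_b\geq 0$ for $a\neq b$ and hence at most one negative marginal, is what makes such a distribution well defined), and (ii) the verification that, for \emph{every} label $b$ the optimum might hold at coordinate $i$, the conditional expected loss --- which orthant submodularity and pairwise monotonicity let you bound by an expression in the $\Delta_a$'s, as you indicate --- is at most $\sum_a p_a \Delta_a$. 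Because the single distribution must satisfy this family of inequalities simultaneously for all $b$, the verification is a genuine case analysis (and is the bulk of the proof of \cite[Theorem 2.3]{SODA/ITY15}); asserting that the probabilities ``must be tuned'' so that it holds assumes precisely what is to be proved. As it stands your proposal is a correct reduction of the theorem to that one inequality, not a proof of it.
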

The maximum of a $k$-submodular function $g$ on $[0, k]^n$ is always attained at $[k]^n$ (see \cite[Proposition 2.1]{SODA/ITY15}).
Indeed, for any maximizer $z \in [0, k]^n$, choose any $z_1, z_2 \in [k]^n$ with $z = z_1 \sqcap z_2 = z_1 \sqcup z_2$; 
both $z_1$ and $z_2$ are maximizers by $k$-submodularity, $g(z_1) + g(z_2) \geq 2g(z)$.
Therefore if we have a nonnegative $k$-submodular relaxation of given $f$ and its value oracle, then by using Iwata--Tanigawa--Yoshida algorithm we obtain an approximate maximum solution of $f$ with factor $1/2$ on average.
Our algorithm is again useful for the case where $f$ is given as a VCSP form, i.e., the sum of small arity functions.
If we obtain a nonnegative relaxation for each summand, then this approximation scheme is applicable.

Further study on nonnegative $k$-submodular relaxation is left to future work.

\section{Proofs}\label{sec:proof}
For $x \in [0,k]^n$, let $Z(x)$ denote the number of indices $i$ with $x_i = 0$. For $A \subseteq [k]^n$, let $C_{\theta}(A)$ denote the minimum subset $X$ of $[k]^n$ containing $A$ such that $\theta(x, y, z) \in X$ for all $x, y, z \in X$. For $B \subseteq [0, k]^n$, let $C_{\sqcap}(B)$ (resp., $C_{\sqcap, \sqcup}(B)$) denote the minimum subset $X$ of $[0, k]^n$ containing $B$ such that $x \sqcap y \in X$ (resp., $x \sqcap y, x \sqcup y \in X$) for all $x, y \in X$. Note that all sets, $C_{\theta}(\cdot), C_{\sqcap}(\cdot), C_{\sqcap, \sqcup}(\cdot)$, are uniquely determined. In particular, $A \mapsto C_{\theta}(A)$, $B \mapsto C_{\sqcap}(B)$ and $B \mapsto C_{\sqcap, \sqcup}(B)$ are closure operators. Observe that $\theta$ can be represented by $\sqcup$ as follows:
\begin{align}\label{eq:theta}
\theta(x, y, z) = ((x \sqcup y) \sqcup z) \sqcup (x \sqcup y).
\end{align}

\subsection{Proof of Theorem \ref{thm:chara}}
\begin{lem}\label{lem:sqcap}
For all $A \subseteq [k]^n$, it holds that $C_{\sqcap, \sqcup}(C_{\theta}(A)) = C_{\sqcap}(C_{\theta}(A))$.
\end{lem}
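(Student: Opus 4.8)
The inclusion $C_{\sqcap}(C_{\theta}(A)) \subseteq C_{\sqcap,\sqcup}(C_{\theta}(A))$ is immediate, since the right-hand closure is taken under a larger set of operations. Writing $B := C_{\theta}(A)$, it therefore suffices to prove that $C_{\sqcap}(B)$ is already closed under $\sqcup$: being then a $\{\sqcap,\sqcup\}$-closed set containing $B$, it contains the smallest such set $C_{\sqcap,\sqcup}(B)$, and equality follows. A direct coordinatewise check shows that $\sqcap$ is idempotent, commutative and associative, so $C_{\sqcap}(B) = \{\bigsqcap S : \emptyset \neq S \subseteq B\}$, where $(\bigsqcap S)_i$ is the common value of the $i$-th coordinates of the elements of $S$ if these agree and $0$ otherwise. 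Thus I fix $u = \bigsqcap S$ and $v = \bigsqcap T$ with $S,T \subseteq B$ nonempty, and my goal is $u \sqcup v \in C_{\sqcap}(B)$.

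My plan is to use the following membership criterion, reading $0$ as ``undefined''. For $\phi \in [0,k]^n$ with support $F := \{i : \phi_i \neq 0\}$, put $B_{\phi} := \{x \in B : x_i = \phi_i \text{ for all } i \in F\}$; then $\phi \in C_{\sqcap}(B)$ if and only if $B_{\phi} \neq \emptyset$ and $\bigsqcap B_{\phi} = \phi$, the latter amounting to the requirement that for every $j \notin F$ some two elements of $B_{\phi}$ differ at $j$. I would apply this with $\phi = u \sqcup v$. Writing $P := \mathrm{supp}(u)$, $Q := \mathrm{supp}(v)$ and $K := \{i \in P \cap Q : u_i \neq v_i\}$, one computes $F = (P \cup Q) \setminus K$, with $\phi = u$ on $P \setminus Q$, $\phi = v$ on $Q \setminus P$, and $\phi = u = v$ on $(P \cap Q) \setminus K$; outside $F$ the coordinate is either jointly undefined ($i \notin P \cup Q$) or conflicting ($i \in K$).

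The engine of the proof is that $\theta$ is a majority operation, so the $\theta$-closed set $B$ is $2$-decomposable: by the Baker--Pixley theorem (or, in a self-contained manner, by extracting the binary-decomposition structure of dual-discriminator-closed relations from the identity \eqref{eq:theta}), one has $x \in B$ if and only if $(x_i, x_{i'})$ lies in the binary projection $B_{ii'}$ for every pair $i, i'$. I would use this in two steps: (i) to exhibit an $x \in B$ with $x|_F = \phi|_F$, so that $B_{\phi} \neq \emptyset$, which I would write as an explicit $\theta$-combination of elements of $S \cup T$ and certify to lie in $B$ by checking its binary projections; and (ii) to produce, for each $j \notin F$, two elements of $B_{\phi}$ differing at $j$, by transporting a genuine disagreement (within $S$ when $j \notin P$, within $T$ when $j \notin Q$, or the conflict $u_j \neq v_j$ when $j \in K$) into $B_{\phi}$.

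The main obstacle is precisely this transport in (i) and (ii): building elements of $B$ that match $\phi$ on all of $F$ while still witnessing the correct disagreements off $F$, without inadvertently agreeing there. Naive $\theta$-combinations fail because of the majority collapse $\theta(a,a,b) = a$; for instance $\theta(s, s', t)$ is pinned to the common value $u_i$ on all of $P$ and so cannot be steered to disagree on the conflict coordinates $K \subseteq P$. It is exactly the reduction to binary projections afforded by $2$-decomposability that resolves this, since there each required pair is visibly present in $B_{ii'}$; once the witnesses are in hand, the remaining verifications are routine coordinate case analyses. I expect this localisation to binary projections to be the crux of the argument.
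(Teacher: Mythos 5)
Your proposal is correct in outline but takes a genuinely different route from the paper. The paper argues entirely by hand: it builds a global witness $u \in C_{\theta}(A)$ agreeing with $x$ on $X$ and with $y$ on $Y \setminus X$ by a chain of $\theta$-applications ($u^{j_1\dots j_k} := \theta(u^{j_1\dots j_{k-1}}, u^{j_k}, y^1)$), and then, in place of your membership criterion ``$B_{\phi} \neq \emptyset$ and the $\sqcap$-meet of $B_{\phi}$ equals $\phi$'', runs an extremal argument: among elements of $C_{\sqcap}(C_{\theta}(A))$ matching $x \sqcup y$ on $X \cup Y$ it picks one with the maximum number of zero coordinates and kills any surviving nonzero coordinate $l$ with a further witness $w = \theta(\theta(x^p, x^q, y^r), u, y^r)$, a contradiction. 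You instead localize via Baker--Pixley $2$-decomposability of $\theta$-closed relations, which is legitimate ($\theta$ is a majority operation) and does carry the day; note you need it for the projections of $B$ onto $F \cup \{j\}$ (projections of $\theta$-closed sets are again $\theta$-closed) so that pairwise-consistent partial tuples extend. Two caveats about the steps you call routine, since they are where the work sits. First, cross pairs such as $(u_i, v_{i'})$ for $i \in P$, $i' \in Q \setminus P$ are not ``visibly present'' in $B_{ii'}$: they require a one-shot combination, e.g.\ $\theta(s, s', t)$ with $s_{i'} \neq s'_{i'}$, which pins coordinate $i$ to $u_i$ and coordinate $i'$ to $t_{i'} = v_{i'}$ --- the very combinations you dismissed globally do work locally, because on two coordinates the majority collapse cannot interfere. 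Second, in step (ii) you must certify a \emph{single} value $c$ at $j$ against all of $F$ simultaneously: for $j \notin P \cup Q$ the correct choices are $c = t_j$ with $t \in T$ (certified against $P$-coordinates by $\theta(s, s', t)$ with $s_j \neq s'_j$) and symmetrically $c = s_j$ via $\theta(t, t', s)$ with $t_j \neq t'_j$, while for $j \in K$ the two values $u_j \neq v_j$ work because $S$ (resp.\ $T$) is constant at $j$; with these checks filled in, your argument closes. As for what each approach buys: the paper's proof is elementary, self-contained, and mirrors the closure computations its algorithm performs, whereas yours imports a classical universal-algebra theorem, has a cleaner local-to-global structure, and generalizes verbatim to any majority polymorphism in place of the dual discriminator --- at the price of an external black box and a deferred case analysis that is, in fact, the substance of the proof.
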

\begin{proof}
The inclusion ($\supseteq$) is obvious. Therefore it suffices to prove that $C_{\sqcap}(C_{\theta}(A))$ is closed under $\sqcup$. Take arbitrary $x, y \in C_{\sqcap}(C_{\theta}(A))$. Our goal is to show $x \sqcup y \in C_{\sqcap}(C_{\theta}(A))$. By the definition of $C_{\sqcap}$, there are $x^1, \dots, x^s, y^1, \dots, y^t \in C_{\theta}(A)$ such that $x = x^1 \sqcap \dots \sqcap x^s$ and $y = y^1 \sqcap \dots \sqcap y^t$ (note that $\sqcap$ is associative).
Let $X := \{ i \mid x_i \neq 0 \}$ and $Y := \{ j \mid y_j \neq 0 \}$. First we show that there exists $u \in C_{\theta}(A)$ such that $u_i = x_i$ for $i \in X$ and $u_i = y_i$ for $i \in Y \setminus X$. By the definition of $\sqcup$, $X$, and $Y$, we have
\[ (x \sqcup y)_i = \begin{cases} 0 & \text{if $x_i = y_i = 0$ or $0 \neq x_i \neq y_i \neq 0$},\\ x_i & \text{if $i \in X \setminus Y$ or $x_i = y_i \neq 0$,} \\ y_i & \text{if $i \in Y \setminus X$}. \end{cases} \]
Let $Y \setminus X = \{j_1,\dots,j_a\}$. For all $j \in Y \setminus X$, there exists a pair $(p_j,q_j)$ of indices in $\{1,2,..,s\}$ such that $x^{p_j}_j \neq x^{q_j}_j$, since $x_j = 0$. Let $u^j := \theta(x^{p_j}, x^{q_j}, y^1)$. Then we have $u^j_i = x^{p_j}_i = x^{q_j}_i = x_i$ for $i \in X$, and $u^j_j = y^1_j = y_j$. Define $u^{j_1\dots j_k} := \theta(u^{j_1 \dots j_{k-1}}, u^{j_k}, y^1)$ $(2 \leq k \leq a)$. It is easily seen that $u^{j_1\dots j_a} \in C_{\theta}(A)$, $u^{j_1\dots j_a}_i = x_i$ for $i \in X$, and $u^{j_1\dots j_a}_i = y_i$ for $i \in Y \setminus X$. Similarly, there exists $v \in C_{\theta}(A)$ such that $v_i = y_i$ for $i \in Y$ and $v_i = x_i$ for $i \in X \setminus Y$. Hence $u \sqcap v \in C_{\sqcap}(C_{\theta}(A))$. It holds that $(u \sqcap v)_i = (x \sqcup y)_i$ for all $i \in X \cup Y$. Therefore the set $B := \{ z \in C_{\sqcap}(C_{\theta}(A)) \mid \text{$z_i = (x \sqcup y)_i$ for all $i \in X \cup Y$} \}$ is nonempty.

Take $z \in B$ with maximum $Z(z)$. We show $z = x \sqcup y$ (implying $x \sqcup y \in C_{\sqcap}(C_{\theta}(A))$, as required). Suppose to the contrary that $z \neq x \sqcup y$. By assumption,  there exists $l$ such that $z_l \neq x_l = y_l = 0$. For this $l$, there exist $p, q, r$ such that $x^p_l \neq x^q_l$ and $y^r_l \neq z_l$, since $x_l = y_l = 0$. Let $w^0 := \theta(x^p, x^q, y^r)$. It holds that $w^0_i = x_i$ for $i \in X$, and $w^0_l = y^r_l \neq z_l$. Define $w := \theta(w^0, u, y^r)$ ($u \in C_{\theta}(A)$ such that $u_i = x_i$ for $i \in X$ and $u_i = y_i$ for $i \in Y \setminus X$). It is clear that $w_i = x_i$ for $i \in X$, $w_i = y_i$ for $i \in Y \setminus X$, and $w_l = y^r_l \neq z_l$. Therefore $z \sqcap w \in B$ and $Z(z) < Z(z \sqcap w)$, since $(z \sqcap w)_l = 0$. However this is a contradiction to the maximality of $z$. Thus $z = x \sqcup y$.
\end{proof}

\begin{lem}\label{lem:theta}
For all $A \subseteq [k]^n$, it holds that $C_{\sqcap, \sqcup}(A) \cap [k]^n = C_{\theta}(A)$.
\end{lem}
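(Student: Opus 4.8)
The plan is to prove the two inclusions of $C_{\sqcap, \sqcup}(A) \cap [k]^n = C_{\theta}(A)$ separately, leaning on the representation (\ref{eq:theta}) of $\theta$ in terms of $\sqcup$ and on Lemma~\ref{lem:sqcap}.

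For the inclusion $(\supseteq)$ I would show that $C_\theta(A)$ lies in both sets being intersected on the left. Since $\theta$ maps $[k]^3$ into $[k]$, applying it never creates a zero coordinate, so $C_\theta(A) \subseteq [k]^n$. Moreover, because $C_{\sqcap,\sqcup}(A)$ is closed under $\sqcup$, identity (\ref{eq:theta}) shows it is also closed under $\theta$; as it contains $A$, it contains the $\theta$-closure $C_\theta(A)$. Combining these gives $C_\theta(A) \subseteq C_{\sqcap,\sqcup}(A) \cap [k]^n$.

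The substance is in the reverse inclusion $(\subseteq)$, and the key step is to first establish the set identity
\[ C_{\sqcap,\sqcup}(A) = C_\sqcap(C_\theta(A)). \]
One direction is immediate: $C_\theta(A) \subseteq C_{\sqcap,\sqcup}(A)$ by the previous paragraph, and since $C_{\sqcap,\sqcup}(A)$ is $\sqcap$-closed, taking the $\sqcap$-closure of the subset $C_\theta(A)$ stays inside it. For the other direction I would invoke Lemma~\ref{lem:sqcap}, which gives $C_\sqcap(C_\theta(A)) = C_{\sqcap,\sqcup}(C_\theta(A))$; hence $C_\sqcap(C_\theta(A))$ is closed under both $\sqcap$ and $\sqcup$ and contains $A$, so it contains $C_{\sqcap,\sqcup}(A)$. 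This is exactly where Lemma~\ref{lem:sqcap} does the real work, upgrading $\sqcap$-closure to full $\sqcap,\sqcup$-closure.

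With the identity in hand, it remains to intersect with $[k]^n$. Every element of $C_\sqcap(C_\theta(A))$ has the form $w = x^1 \sqcap \cdots \sqcap x^s$ with $x^1, \dots, x^s \in C_\theta(A)$. If $w$ has no zero coordinate, then for each $i$ the value $w_i$ is nonzero, which by the definition of $\sqcap$ forces $x^1_i = \cdots = x^s_i = w_i$; as this holds at every coordinate, $x^1 = \cdots = x^s = w$, so $w = x^1 \in C_\theta(A)$. Thus $C_{\sqcap,\sqcup}(A) \cap [k]^n = C_\sqcap(C_\theta(A)) \cap [k]^n \subseteq C_\theta(A)$, completing the proof. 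I expect the main obstacle to be the reduction $C_{\sqcap,\sqcup}(A) = C_\sqcap(C_\theta(A))$, since this is precisely the place where Lemma~\ref{lem:sqcap} is needed to collapse the $\sqcup$-closure onto the $\sqcap$-closure; once that reduction is available, the intersection with $[k]^n$ follows directly from the fact that $\sqcap$ produces a zero whenever two operands disagree.
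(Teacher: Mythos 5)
Your proof is correct and follows essentially the same route as the paper: both arguments rest on identity~(\ref{eq:theta}) to get $C_{\theta}(A) \subseteq C_{\sqcap,\sqcup}(A)$, on Lemma~\ref{lem:sqcap} as the crucial step collapsing the $\sqcup$-closure, and on the observation that an element of $C_{\sqcap}(C_{\theta}(A))$ with no zero coordinate forces all its $\sqcap$-factors to coincide. The only difference is cosmetic packaging --- you state the intermediate identity as $C_{\sqcap,\sqcup}(A) = C_{\sqcap}(C_{\theta}(A))$ via minimality, while the paper derives the equivalent $C_{\sqcap,\sqcup}(C_{\theta}(A)) = C_{\sqcap,\sqcup}(A)$ from monotonicity and idempotence of the closure operator.
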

\begin{proof}
By (\ref{eq:theta}), we have $C_{\sqcap, \sqcup}(A) \supseteq C_{\theta}(A) \supseteq A$. Since $C_{\sqcap, \sqcup}$ is a closure operator, it holds that $C_{\sqcap, \sqcup}(A) = C_{\sqcap, \sqcup}(C_{\sqcap, \sqcup}(A)) \supseteq C_{\sqcap, \sqcup}(C_{\theta}(A)) \supseteq C_{\sqcap, \sqcup}(A)$. In particular, $C_{\sqcap, \sqcup}(C_{\theta}(A)) = C_{\sqcap, \sqcup}(A)$. Hence $C_{\sqcap, \sqcup}(A) \cap [k]^n = C_{\sqcap, \sqcup}(C_{\theta}(A)) \cap [k]^n$. By Lemma \ref{lem:sqcap}, $C_{\sqcap, \sqcup}(C_{\theta}(A)) \cap [k]^n = C_{\sqcap}(C_{\theta}(A)) \cap [k]^n$. 
Here $C_{\sqcap}(C_{\theta}(A)) \cap [k]^n = C_{\theta}(A)$ holds.
Indeed, for $x \in C_{\sqcap}(C_{\theta}(A)) \cap [k]^n$, there are $x^1, x^2, \dots, x^m \in C_{\theta}(A)$ with $x = x^1 \sqcap \dots \sqcap x^m$. Then $x^1 = x^2 = \dots = x^m = x$ must hold, otherwise $x$ includes $0$, contradicting $x \in [k]^n$.
Thus $x \in C_{\theta}(A)$ and $C_{\sqcap}(C_{\theta}(A)) \cap [k]^n = C_{\theta}(A)$. Consequently, $C_{\sqcap, \sqcup}(A) \cap [k]^n = C_{\theta}(A)$.
\end{proof}

\begin{prop}\label{prop:chara}
A function $f : [k]^n \rightarrow \overline{\mathbb{R}}$ admits a $k$-submodular relaxation if and only if
\[C_{\sqcap, \sqcup}({\rm dom}\ f) \cap [k]^n = {\rm dom}\ f.\]
\end{prop}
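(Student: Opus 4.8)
The plan is to prove the two directions separately: the forward direction is a short domain-closure observation, and the reverse direction is an explicit construction. Note at the outset that Lemmas~\ref{lem:sqcap} and \ref{lem:theta} are not needed in the argument I propose; they serve only to rewrite the condition of this proposition as the $\theta$-closure condition of Theorem~\ref{thm:chara}, so proving Proposition~\ref{prop:chara} is self-contained.

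For necessity ($\Rightarrow$), suppose $g$ is a $k$-submodular relaxation of $f$. The key point is that the effective domain of any $k$-submodular function is closed under $\sqcap$ and $\sqcup$: if $g(x), g(y) < +\infty$, then $g(x\sqcap y) + g(x\sqcup y) \le g(x)+g(y) < +\infty$ forces both $g(x\sqcap y)$ and $g(x\sqcup y)$ to be finite. Since ${\rm dom}\ g \supseteq {\rm dom}\ f$ and ${\rm dom}\ g$ is $\sqcap,\sqcup$-closed, it contains the closure, so ${\rm dom}\ g \supseteq C_{\sqcap,\sqcup}({\rm dom}\ f)$. Intersecting with $[k]^n$ and using ${\rm dom}\ g \cap [k]^n = {\rm dom}\ f$ (because $g$ restricts to $f$) gives $C_{\sqcap,\sqcup}({\rm dom}\ f)\cap[k]^n \subseteq {\rm dom}\ f$; the reverse inclusion is immediate from ${\rm dom}\ f \subseteq C_{\sqcap,\sqcup}({\rm dom}\ f)$ and ${\rm dom}\ f \subseteq [k]^n$.

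For sufficiency ($\Leftarrow$), I would write $D := C_{\sqcap,\sqcup}({\rm dom}\ f)$ and build a relaxation directly. Since adding a constant to $f$ preserves both $k$-submodularity and the restriction property, I may assume $\max f \le 0$. Keep $g = f$ on ${\rm dom}\ f$, set $g \equiv +\infty$ off $D$, and on the points of $D$ having at least one zero coordinate put $g(x) := -M_{Z(x)}$ for a sequence $0 < M_1 < M_2 < \cdots$ to be chosen. The hypothesis $D \cap [k]^n = {\rm dom}\ f$ is exactly what makes $g|_{[k]^n} = f$: a point of $[k]^n\setminus {\rm dom}\ f$ lies off $D$ and so correctly receives $+\infty$. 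Because $D$ is closed under $\sqcap,\sqcup$, every $k$-submodular inequality with $x\notin D$ or $y\notin D$ is trivial (the left side is $+\infty$), so it remains to check $g(x)+g(y)\ge g(x\sqcap y)+g(x\sqcup y)$ for $x,y\in D$.

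That verification rests on the counting identity $Z(x\sqcap y) + Z(x\sqcup y) = Z(x) + Z(y) + 2c$, where $c$ is the number of coordinates on which $x,y$ take distinct nonzero values, together with $Z(x\sqcap y) \ge \max\{Z(x), Z(y), Z(x\sqcup y)\}$. These reduce the inequality to one among the $M_j$, with values of $f$ entering only through points at zero-level $0$; choosing the $M_j$ convex, increasing and super-additive (e.g. $M_j = B\cdot 3^j$ with $B$ large relative to $\max|f|$) makes it hold. The step I expect to be the main obstacle is the case in which the join $x\sqcup y$ sits at a low zero-level, in particular level $0$, because there $g(x\sqcup y)$ equals an uncontrollable value of $f$ on the right-hand side. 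The resolution is that in every such configuration either $x$ and $y$ are comparable (one obtained from the other by zeroing coordinates), so that $\{x\sqcap y,\ x\sqcup y\}=\{x,y\}$ and the inequality is a trivial equality, or the penalty gap $M_{Z(x\sqcap y)}$ at the meet — which sits at a strictly higher zero-level — dominates the bounded $f$-contribution by super-additivity. Undoing the shift then yields a $k$-submodular relaxation of the original $f$.
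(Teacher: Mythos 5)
Your proof is correct, and its sufficiency half takes a genuinely different route from the paper's. The necessity half is the paper's argument verbatim (the effective domain of a $k$-submodular function is closed under $\sqcap,\sqcup$, then intersect with $[k]^n$), and you are right that Lemmas~\ref{lem:sqcap} and \ref{lem:theta} play no role in Proposition~\ref{prop:chara} itself: the paper invokes them only afterwards, to translate the $C_{\sqcap,\sqcup}$-condition into the $\theta$-condition of Theorem~\ref{thm:chara}. For sufficiency, however, the paper constructs nothing explicit: it views the $k$-submodular functions with effective domain $C_{\sqcap,\sqcup}({\rm dom}\ f)$ as a polyhedron $\{\bm{g} \mid A\bm{g}\le\bm{0}\}$ and runs Fourier--Motzkin elimination on the variables $\bm{g}_x$ with $Z(x)\ge 1$ in decreasing order of $Z(x)$; a negative coefficient on the variable being eliminated would force $x=x\sqcap y$ by maximality of $Z(x)$, so each elimination step merely deletes inequalities, and the projection onto the ${\rm dom}\ f$-coordinates is the whole space $\mathbb{R}^{N'}$. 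Your penalty construction checks out as a replacement: the identity $Z(x\sqcap y)+Z(x\sqcup y)=Z(x)+Z(y)+2c$ is correct, and with $g(x):=-B\cdot 3^{Z(x)}$ on $C_{\sqcap,\sqcup}({\rm dom}\ f)\setminus{\rm dom}\ f$, $B\ge\max_{x\in{\rm dom}\ f}|f(x)|$, after normalizing $\max f\le 0$, the case analysis closes: if $Z(x\sqcup y)\ge 1$, any $f$-values enter only the larger side and are absorbed because the smaller side carries the extra term $-B\cdot 3^{Z(x\sqcup y)}\le -3B$, while the all-penalty configurations follow from majorization of the convex sequence $j\mapsto 3^j$ using $Z(x\sqcap y)\ge\max\{Z(x),Z(y)\}$ and the identity; if $Z(x\sqcup y)=0$, then either $c=0$ and $\min\{Z(x),Z(y)\}=0$, whence $\{x\sqcap y,x\sqcup y\}=\{x,y\}$ and the inequality is an equality, or $Z(x\sqcap y)\ge Z(x)+Z(y)$ with both positive (superadditivity of $3^j$), or $Z(x\sqcap y)\ge Z(x)+2$, where the gap $B(3^{Z(x)+2}-3^{Z(x)})\ge 24B$ dominates the bounded $f$-contribution, exactly as you predicted. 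As for what each approach buys: yours is elementary, self-contained, and immediately constructive, and it even produces an \emph{integer-valued} relaxation for integer-valued $f$, which in that one respect is stronger than the half-integrality of Theorem~\ref{thm:algo}; the paper's elimination argument, by contrast, is the source of the greedy inequality representation (\ref{ineq:z}), which is what drives Algorithm~\ref{algo} and the half-integrality and binary-maximality properties of Theorem~\ref{thm:algo} --- your relaxation, with values of order $-B\cdot 3^n$ far below the maximal one, would be useless for those later purposes (e.g.\ keeping the FPT parameter $d={\rm OPT}(I)-{\rm OPT}(I')$ small, or obtaining nonnegative relaxations).
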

\begin{proof}
(only-if part). 
Suppose that $f$ has a $k$-submodular relaxation $g$. By the equation ${\rm dom}\ f = {\rm dom}\ g \cap [k]^n$, we have
\begin{align*}
C_{\sqcap, \sqcup}({\rm dom}\ f) \cap [k]^n \supseteq {\rm dom}\ f = {\rm dom}\ g \cap [k]^n = C_{\sqcap, \sqcup}({\rm dom}\ g) \cap [k]^n\\
\supseteq (C_{\sqcap, \sqcup}({\rm dom}\ g \cap [k]^n)) \cap [k]^n = C_{\sqcap, \sqcup}({\rm dom}\ f) \cap [k]^n,
\end{align*}
where ${\rm dom}\ g = C_{\sqcap, \sqcup}({\rm dom}\ g)$ follows from the fact that the domain of a $k$-submodular function is closed under $\sqcap, \sqcup$.
Thus $C_{\sqcap, \sqcup}({\rm dom}\ f) \cap [k]^n = {\rm dom}\ f$, as required.

(if part). Assume $C_{\sqcap, \sqcup}({\rm dom}\ f) \cap [k]^n = {\rm dom}\ f$. Let $N := |C_{\sqcap, \sqcup}({\rm dom}\ f)|$ and $N^\prime := |{\rm dom}\ f|$. We can consider a function $g : [0, k]^n \rightarrow \overline{\mathbb{R}}$ with ${\rm dom}\ g = C_{\sqcap, \sqcup}({\rm dom}\ f)$ as a vector $\bm{g} \in \mathbb{R}^N$, where the $x$th component $\bm{g}_x$ of $\bm{g}$ for $x \in {\rm dom}\ g$ is defined as $g(x)$. 
By definition, the set of all $k$-submodular functions is defined by linear inequalities (\ref{ineq:ksub}), and hence forms a polyhedron $P = \{ \bm{g} \in \mathbb{R}^N \mid A\bm{g} \leq \bm{0}\}$.
Therefore, the set of functions which admit a $k$-submodular relaxation can be considered as the projection $P^\prime := \{ \bm{f} \in \mathbb{R}^{N^\prime} \mid \bm{g} \in P,\ \bm{f} \text{ is the projection of $\bm{g}$ to $\mathbb{R}^{N^\prime}$} \}$ of $P$. Let us prove that the projection $P^\prime$ is equal to $\mathbb{R}^{N^\prime}$.

We can obtain $P^\prime$ by the elimination of all variables $\bm{g}_x$ ($x \in {\rm dom}\ g \setminus {\rm dom}\ f$) by using the Fourier--Motzkin elimination method.
We repeatedly eliminate variables $\bm{g}_x$ by taking an index $x$ with maximum $Z(x)$, i.e., the number of zeros in $x$, in each step. Suppose that an index $x$ is chosen in the first step.
Then coefficients of $\bm{g}_x$ in $A\bm{g} \leq 0$ are positive or zero. Indeed, assume that there exists an inequality such that the coefficient of $\bm{g}_x$ is negative. Namely, there is a nontrivial inequality
\begin{align}\label{ineq:minus}
g(x \sqcap y) + g(x \sqcup y) - g(x) - g(y) \leq 0
\end{align}
with $x \sqcap y \neq x \neq x \sqcup y$.
By the definition of $x$, we have $Z(x) \geq Z(x \sqcap y)$. By the definition of $\sqcap$, we have $Z(x) \leq Z(x \sqcap y)$. Thus $Z(x) = Z(x \sqcap y)$, $x = x \sqcap y$, and $y = x \sqcup  y$. This means that the coefficient of $\bm{g}_x$ is positive in all inequalities containing $\bm{g}_x$. So the linear inequality system of the projection is obtained by simply removing all inequalities containing variable $\bm{g}_x$.
Now suppose that the set $S$ of variables has been eliminated by the Fourier--Motzkin method, and the corresponding system of inequalities consists of the original inequalities not containing variables in $S$, as above.
In the next step, the Fourier--Motzkin procedure chooses an index $x$ with maximum $Z(x)$ over ${\rm dom}\ g \setminus S$. By a similar argument, the coefficients of $\bm{g}_x$ in the current inequalities are positive or zero; inequalities having negative coefficients at $\bm{g}_x$ have already been removed in the previous steps.
Thus there are finally no inequalities. (Note that there exist no inequalities only containing elements in ${\rm dom}\ f$, since if $x, y \in {\rm dom}\ f$ and $x \neq y$, then $x \sqcap y = x \sqcup y \in {\rm dom}\ g \setminus {\rm dom}\ f$.)
This means that $P^\prime = \mathbb{R}^{N^\prime}$, as required.
\end{proof}

By the proof of Proposition \ref{prop:chara}, the $k$-submodular inequalities can be represented as follows: For any $x, y \in {\rm dom}\ g$ such that $z = x \sqcap y$ and $Z(z) > \max\{Z(x), Z(y)\}$,
\begin{align}\label{ineq:z}
	g(z) \leq \begin{cases}\displaystyle \frac{1}{2} \left(g(x) + g(y)\right) & \text{if $x \sqcap y = x \sqcup y$},\\
		g(x) + g(y) - g(x \sqcup y) & \text{otherwise}. \end{cases}
\end{align}

We are now ready to prove Theorem \ref{thm:chara}.
By Lemma \ref{lem:theta} and Proposition \ref{prop:chara}, a function $f : [k]^n \rightarrow \overline{\mathbb{R}}$ admits a $k$-submodular relaxation if and only if $C_{\theta}({\rm dom}\ f) = {\rm dom}\ f$. It is clear that this statement is the same as Theorem \ref{thm:chara}.

\subsection{Proof of Theorem \ref{thm:algo}}
We present an algorithm with the claimed properties in Algorithm \ref{algo}.
\begin{algorithm}
\caption{$k$-submodular relaxation}\label{algo}
\label{alg1}
\begin{algorithmic}
\STATE Initialize $g^{(0)}$ as follows:
\[ g^{(0)}(x) := \begin{cases} f(x) & \text{if $x \in {\rm dom}\ f$}, \\ + \infty & {\rm otherwise}. \end{cases} \]
\FOR{$i = 1$ to $n$}
\STATE $g^{(i)} := g^{(i-1)}$
\FOR{all $x, y \in {\rm dom}\ g^{(i-1)}$ such that $Z(x \sqcap y) = i$}
\IF{$x \sqcap y = x \sqcup y$}
\IF{$g^{(i)}(x \sqcap y) > \left( g^{(i-1)}(x) + g^{(i-1)}(y)\right)/2$}
\STATE $g^{(i)}(x \sqcap y) := \left(g^{(i-1)}(x) + g^{(i-1)}(y)\right)/2$
\ENDIF
\ELSIF{$x \sqcup y \not\in {\rm dom}\ g^{(i-1)}$}
\RETURN $f$ has no $k$-submodular relaxation.
\ELSE
\IF{$g^{(i)}(x \sqcap y) > g^{(i-1)}(x) + g^{(i-1)}(y) - g^{(i-1)}(x \sqcup y)$}
\STATE $g^{(i)}(x \sqcap y) := g^{(i-1)}(x) + g^{(i-1)}(y) - g^{(i-1)}(x \sqcup y)$
\ENDIF
\ENDIF
\ENDFOR
\ENDFOR
\RETURN $g^{(n)}$
\end{algorithmic}
\end{algorithm}
Let us prove that Algorithm~\ref{algo} correctly determines whether a function $f : [k]^n \rightarrow \overline{\mathbb{R}}$ has a $k$-submodular relaxation, and constructs a $k$-submodular relaxation if it exists. 
First we show that if Algorithm \ref{algo} returns ``$f$ has no $k$-submodular relaxation," then the input function $f$ actually has no $k$-submodular relaxation. 
To prove this statement, we show the following claim.
\begin{cl}\label{cl}
It holds that ${\rm dom}\ g^{(i)} = \{ z \in C_{\sqcap}({\rm dom}\ f) \mid Z(z) \leq i \}$.
\end{cl}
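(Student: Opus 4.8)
The plan is to prove the claim by induction on $i$, writing $D_i := \{ z \in C_{\sqcap}({\rm dom}\ f) \mid Z(z) \leq i \}$ for brevity. Throughout I read the claim as asserting ${\rm dom}\ g^{(i)} = D_i$ for every $i$ at which $g^{(i)}$ is actually produced, i.e.\ for which the algorithm has not already returned ``$f$ has no $k$-submodular relaxation'' during iterations $1, \dots, i$. For the base case $i = 0$ I note that ${\rm dom}\ g^{(0)} = {\rm dom}\ f$ by the initialization, while $D_0 = C_{\sqcap}({\rm dom}\ f) \cap [k]^n$ since $Z(z) \leq 0$ means $z$ has no zero coordinate. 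The equality $C_{\sqcap}({\rm dom}\ f) \cap [k]^n = {\rm dom}\ f$ is then exactly the meet-decomposition argument already used in the proof of Lemma~\ref{lem:theta}: if $z = x^1 \sqcap \cdots \sqcap x^m$ with each $x^j \in {\rm dom}\ f$ and $z$ has no zero, then all the $x^j$ must agree in every coordinate, forcing $x^1 = \cdots = x^m = z \in {\rm dom}\ f$.

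For the inductive step, assume ${\rm dom}\ g^{(i-1)} = D_{i-1}$. The inclusion ${\rm dom}\ g^{(i)} \subseteq D_i$ is the easy direction: the algorithm only assigns finite values to entries $g^{(i)}(x \sqcap y)$ with $x, y \in {\rm dom}\ g^{(i-1)} = D_{i-1} \subseteq C_{\sqcap}({\rm dom}\ f)$ and $Z(x \sqcap y) = i$, so every newly added point lies in $C_{\sqcap}({\rm dom}\ f)$ by closure under $\sqcap$ and satisfies $Z = i$; together with ${\rm dom}\ g^{(i-1)} = D_{i-1} \subseteq D_i$ this yields ${\rm dom}\ g^{(i)} \subseteq D_i$.

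The substance is the reverse inclusion $D_i \subseteq {\rm dom}\ g^{(i)}$. Points with $Z(z) \leq i-1$ already lie in ${\rm dom}\ g^{(i-1)} \subseteq {\rm dom}\ g^{(i)}$, so the task reduces to showing that every $z \in C_{\sqcap}({\rm dom}\ f)$ with $Z(z) = i$ is created during iteration $i$. To this end I would prove the key decomposition lemma: any $z \in C_{\sqcap}({\rm dom}\ f)$ with $Z(z) \geq 1$ can be written as $z = x \sqcap y$ with $x, y \in C_{\sqcap}({\rm dom}\ f)$ and $Z(x), Z(y) < Z(z)$. The idea is to fix a representation $z = x^1 \sqcap \cdots \sqcap x^m$ ($x^j \in {\rm dom}\ f$) with $m$ minimal, observe $m \geq 2$ since $z$ has a zero but the $x^j$ do not, and set $x := x^1$ and $y := x^2 \sqcap \cdots \sqcap x^m$. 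Then $Z(x) = 0 < Z(z)$, and $Z(y) < Z(z)$ because a sub-meet always has $Z(y) \leq Z(z)$, while equality would force $y = z$ (on the common nonzero coordinates the values are inherited from the full meet), contradicting the minimality of $m$. Applying this to $z$ with $Z(z) = i$ produces $x, y \in D_{i-1} = {\rm dom}\ g^{(i-1)}$ with $Z(x \sqcap y) = i$, so the pair $(x, y)$ is examined in iteration $i$; since by hypothesis that iteration runs to completion, this pair cannot trigger the ``no relaxation'' return, and hence $g^{(i)}(z)$ is set to a finite value in whichever of the two branches applies. Thus $z \in {\rm dom}\ g^{(i)}$.

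I expect the decomposition lemma to be the main obstacle, and within it the only genuinely delicate point is the strict inequality $Z(y) < Z(z)$: one must argue that removing one factor from a meet strictly decreases the number of zeros, which is precisely where minimality of the representation is essential. A secondary point that needs care is the interaction with early termination: the reverse inclusion is only legitimate under the reading that iteration $i$ completes, and it is exactly this hypothesis that lets me conclude that the good pair $(x, y)$ adds $z$ rather than aborting the algorithm.
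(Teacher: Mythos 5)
Your proof is correct and follows essentially the same route as the paper's: induction on $i$, with the $\supseteq$ direction resting on decomposing each $z \in C_{\sqcap}({\rm dom}\ f)$ with $Z(z)=i$ as $z = x \sqcap y$ for $x,y \in C_{\sqcap}({\rm dom}\ f)$ having strictly fewer zeros. The paper simply asserts this decomposition, whereas you supply a sound justification via a minimal-length meet representation (and you also make explicit the early-termination caveat the paper leaves implicit), so your write-up is a more careful version of the same argument.
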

\begin{proof}[Proof of Claim \ref{cl}]
The inclusion ($\subseteq$) is obvious. We prove ($\supseteq$) by induction on $i$. The case $i = 1$ is trivial.
Indeed, any element $z$ of ${\rm dom}\ g = C_{\sqcap}({\rm dom}\ f)$ with $Z(z) = 1$ can be written as $z = x \sqcap y$ with $x, y \in {\rm dom}\ f$.
For all $z \in C_{\sqcap}({\rm dom}\ f) \setminus {\rm dom}\ f$ with $Z(z) = i+1$, there exist $x, y \in C_{\sqcap}({\rm dom}\ f)$ such that $\max\{Z(x), Z(y)\} \leq i$ and $z = x \sqcap y$. By the induction hypothesis, we have $x, y \in {\rm dom}\ g^{(i)}$, and hence $z \in {\rm dom}\ g^{(i+1)}$. This completes the induction step.
\end{proof}
Here we consider the case of returning ``$f$ has no $k$-submodular relaxation.''
In this case, for some step $i > 1$,  there are $x, y \in {\rm dom}\ g^{(i-1)}$ such that $Z(x \sqcap y) = i$, $x \sqcap y \neq x \sqcup y$, and $x \sqcup y \not\in {\rm dom}\ g^{(i-1)}$.
Then $x, y \in {\rm dom}\ g^{(i-1)}$ and $Z(x \sqcap y) > Z(x \sqcup y)$.
Therefore $x \sqcup y \not\in {\rm dom}\ g^{(i-1)} = \{ z \in C_{\sqcap}({\rm dom}\ f) \mid Z(z) \leq i-1 \}$ (by Claim \ref{cl}).
On the other hand, it is obvious that $x \sqcup y \in \{ z \in C_{\sqcap, \sqcup}({\rm dom}\ f) \mid Z(z) \leq i-1\}$.
This means that $C_{\sqcap}({\rm dom}\ f) \neq C_{\sqcap, \sqcup}({\rm dom}\ f)$.
By Lemma \ref{lem:sqcap}, it necessarily holds that $C_{\theta}({\rm dom}\ f) \neq {\rm dom}\ f$.
By Theorem \ref{thm:chara}, there is no $k$-submodular relaxation for $f$.

Next we consider the case of returning $g^{(n)}$. Returning $g^{(n)}$ means $C_{\sqcap, \sqcup}({\rm dom}\ f) = C_{\sqcap}({\rm dom}\ f)$ since for all $x, y \in C_{\sqcap}({\rm dom}\ f)$, $x \sqcup y \in C_{\sqcap}({\rm dom}\ f)$. Therefore $C_{\sqcap, \sqcup}({\rm dom}\ f) \cap [k]^n = {\rm dom}\ f$, and $f$ admits a $k$-submodular relaxation by Proposition \ref{prop:chara}. 
Furthermore $g^{(n)}$ defined by Algorithm 1 satisfies (\ref{ineq:z}). Thus $g^{(n)}$ is a $k$-submodular function.

Next we show the half-integrality property (property 1 in Theorem \ref{thm:algo}).
\begin{prop}\label{prop:half}
Suppose that a function $f : [k]^n \rightarrow \overline{\mathbb{R}}$ has a $k$-submodular relaxation. Let $g$ be a $k$-submodular relaxation of $f$ constructed by Algorithm \ref{algo}. For all $z \in {\rm dom}\ g \setminus {\rm dom}\ f$, there exist $x, y \in {\rm dom}\ g$ with $z = x \sqcap y$ and $Z(z) > \max\{Z(x), Z(y)\}$ satisfying 1 or 2:
\begin{enumerate}
\item $\displaystyle x, y \in {\rm dom}\ f \text{ and } g(z) = \frac{1}{2} (g(x) + g(y))$;
\item $x \sqcap y \neq x \sqcup y \text{ and } g(z) = g(x) + g(y) - g(x \sqcup y)$.
\end{enumerate}
In particular, if $f$ is integer valued, then $g$ is half-integer-valued
\end{prop}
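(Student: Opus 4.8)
The plan is to prove Proposition~\ref{prop:half} by induction on $Z(z)$ and then read off half-integrality as a corollary. Recall from the description of Algorithm~\ref{algo} that, for $z \in {\rm dom}\ g \setminus {\rm dom}\ f$ with $Z(z) = i$, the value $g(z) = g^{(i)}(z)$ is the minimum, over all pairs $x, y \in {\rm dom}\ g^{(i-1)}$ with $z = x \sqcap y$ (so that $\max\{Z(x), Z(y)\} \le i - 1 < Z(z)$), of the \emph{Case-1 form} $\tfrac12(g(x) + g(y))$ when $x \sqcap y = x \sqcup y$, and of the \emph{Case-2 form} $g(x) + g(y) - g(x \sqcup y)$ otherwise. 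I would fix a pair $(x, y)$ attaining this minimum, and show that $(x, y)$ can be taken either of Case 2, or of Case 1 with both endpoints in ${\rm dom}\ f$.

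If the minimizer is of Case 2, it already matches conclusion~2, so assume it is of Case 1, i.e.\ $z = x \sqcap y = x \sqcup y$; then $x$ and $y$ share a common support $S$ and $Z(x) = Z(y) = n - |S| < Z(z)$. If $x, y \in {\rm dom}\ f$ we are done, so suppose some endpoint, say $x$, lies outside ${\rm dom}\ f$, i.e.\ $S \neq [n]$. Among all Case-1 minimizers of $g(z)$ I would choose one of \emph{maximal} common support $S$ and aim for a contradiction unless $S = [n]$. The key tool is a lifting-and-averaging step: writing $g(x) = \tfrac12(g(x_1) + g(x_2))$ with $x = x_1 \sqcap x_2 = x_1 \sqcup x_2$ and, symmetrically, $g(y) = \tfrac12(g(y_1) + g(y_2))$ with $y = y_1 \sqcap y_2 = y_1 \sqcup y_2$ (Case-1 representations of the lower-level values $g(x), g(y)$ supplied by the induction hypothesis, with $x_p, y_q$ of strictly larger support than $S$), the four numbers $\tfrac12(g(x_p) + g(y_q))$ average exactly to $\tfrac12(g(x)+g(y)) = g(z)$; since every \emph{valid} pairing $(x_p, y_q)$ is itself a representation of $z$ and hence has value $\ge g(z)$ by minimality, each valid one in fact equals $g(z)$, yielding a Case-1 minimizer of $z$ of strictly larger support and contradicting maximality. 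Iterating drives $S$ up to $[n]$, i.e.\ both endpoints into ${\rm dom}\ f$.

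Two points make this step delicate, and I expect them to be the main obstacle. First is the \emph{validity} of the cross-pairings: a pair $(x_p, y_q)$ represents $z$ as a Case-1 pair only when $x_p$ and $y_q$ disagree at every coordinate outside $S$, and when they accidentally agree at some coordinate $\ell \notin S$ the meet $x_p \sqcap y_q$ sits strictly above $z$, so the clean averaging identity must be repaired (for instance by choosing the lifts to avoid such coincidences, or by absorbing the extra coordinates through a further meet and invoking the global $k$-submodularity of $g$, which we have already established). Second, the induction hypothesis may instead return a \emph{Case-2} representation of $g(x)$; then the averaging identity does not apply directly, and I would use that $g$ satisfies all inequalities~(\ref{ineq:ksub}) together with the minimality of $g(z)$ to force the relevant inequality tight and re-extract either a genuine Case-2 representation of $z$ or a larger-support Case-1 pair. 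Managing this support bookkeeping is where the real content lies.

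Granting the proposition, half-integrality follows by a clean second induction on $Z(z)$. Elements with $Z(z) = 0$ lie in ${\rm dom}\ g \cap [k]^n = {\rm dom}\ f$, where $g = f$ is integer valued. For $z$ with $Z(z) > 0$, conclusion~1 gives $g(z) = \tfrac12(g(x) + g(y))$ with $x, y \in {\rm dom}\ f$ integer valued, hence $g(z) \in \tfrac12\mathbb{Z}$; and conclusion~2 gives $g(z) = g(x) + g(y) - g(x \sqcup y)$, where the zero-count identity $Z(x \sqcap y) + Z(x \sqcup y) = Z(x) + Z(y) + 2\,|\{i : 0 \neq x_i \neq y_i \neq 0\}|$ together with $x \sqcap y \neq x \sqcup y$ (which forces a coordinate where exactly one of $x_i, y_i$ vanishes) guarantees $Z(x \sqcup y) < Z(z)$, so that $x, y, x \sqcup y$ all have strictly smaller $Z$ and are half-integer valued by the induction hypothesis, whence $g(z) \in \tfrac12\mathbb{Z}$.
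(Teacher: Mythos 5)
There is a genuine gap, and you have in fact located it yourself: the two ``delicate points'' you defer are not side issues but the entire content of the proof. First, the four-way averaging step is unsound as soon as a single cross-pairing is invalid. If, say, $x_1$ and $y_1$ agree at some coordinate outside $S$, then $x_1 \sqcap y_1 = w \supsetneq z$ (in support), and the algorithm's minimality only gives $\tfrac12(g(x_1)+g(y_1)) \geq g(w)$; there is no a priori relation between $g(w)$ and $g(z)$, so $\tfrac12(g(x_1)+g(y_1))$ may lie \emph{below} $g(z)$. But your argument ``the four half-sums average to $g(z)$ and each is $\geq g(z)$, hence all are equal'' needs \emph{all four} lower bounds; one missing bound destroys the tightness conclusion even for the valid pairings. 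Your proposed repairs do not close this: the lifts are dictated by the induction hypothesis, so you cannot ``choose them to avoid coincidences,'' and ``absorbing by a further meet'' only reproduces the unhelpful bound via $g(w)$. The paper's proof sidesteps the collision problem entirely by an \emph{asymmetric} pairing: it applies the induction hypothesis to one endpoint only, say $x$ with lifts $x^1, x^2$, and pairs each lift with the \emph{original} $y$. Then $x^p \sqcap y = z$ holds automatically (because $y$ vanishes outside $\mathcal{Y} \cup \mathcal{Z}$), and the two joins recombine cleanly, $(x^1 \sqcup y) \sqcap (x^2 \sqcup y) = (x^1 \sqcup y) \sqcup (x^2 \sqcup y) = z$, precisely because $x^1 \sqcap x^2 = x$ forces $x^1_i \neq x^2_i$ outside $\mathcal{X} \cup \mathcal{Z}$; the chain of $k$-submodular inequalities (using (\ref{ineq:ksub}) for the already-established $k$-submodular $g$) is then squeezed tight.

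Second, your target is wrong, which makes the maximal-support scheme unrepairable as stated. The proposition claims conclusion 1 \emph{or} 2, and the paper's resolution of a bad Case-1 representation is not a larger-support Case-1 pair but a \emph{Case-2} pair: tightness of $g(x^1) + g(y) \geq g(x^1 \sqcap y) + g(x^1 \sqcup y)$ yields $g(z) = g(x^1) + g(y) - g(x^1 \sqcup y)$ with $z \neq x^1 \sqcup y$, i.e., conclusion 2 with the pair $(x^1, y)$. A pair witnessing conclusion 1 (both endpoints in ${\rm dom}\ f$) need not exist at all, so driving $S$ up to $[n]$ by contradiction cannot succeed in general. Relatedly, the branch where the induction hypothesis returns a Case-2 representation of $g(x)$ --- half of the induction --- is deferred in one sentence, while in the paper it is the second main computation, requiring the identities $(x^1 \sqcup y) \sqcap x^2 = z$ and $(x^1 \sqcup y) \sqcup x^2 = x^1 \sqcup x^2$ and the count $Z(y) = Z(x) > Z(x^1)$ to certify $z \neq x^1 \sqcup y$. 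Your final half-integrality induction, by contrast, is correct and essentially identical to the paper's, and your explicit verification that $x \sqcap y \neq x \sqcup y$ forces $Z(x \sqcup y) < Z(z)$ is a valid point the paper leaves implicit.
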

\begin{proof}
We will prove this by induction on $Z(z)$. 
By Algorithm~\ref{algo}, for some $x, y \in {\rm dom}\ g$ with $z = x \sqcap y$ and $Z(z) > \max\{Z(x), Z(y)\}$, the value $g(z)$ is equal to $(g(x) + g(y))/2$ if $x \sqcap y = x \sqcup y$, and $g(x) + g(y) - g(x \sqcup y)$ otherwise.
If $Z(z) = 1$, then $Z(x) = Z(y) = 0$ implying $x, y \in {\rm dom}\ f$, $x \sqcap y = x \sqcup y$, and $g(z) = (g(x) + g(y))/2$; we are in 1.
Suppose that $Z(z) \geq 2$, and that $x, y$ satisfy neither 1 nor 2.
Then $x \sqcap y = x \sqcup y$, $g(z) = (g(x) + g(y))/2$, and therefore one of $x, y$ does not belong to ${\rm dom}\ f$.
Say $x \not\in {\rm dom}\ f$.
Here $1 \leq Z(x) < Z(z) = Z(x \sqcap y)$. So by the induction hypothesis applied to $x$, we only consider two cases:
\begin{description}
\item[Case 1:] $\displaystyle g(x) = \frac{1}{2} (g(x^1) + g(x^2))$ for some $x^1, x^2 \in {\rm dom}\ f$ with $x^1 \sqcap x^2 = x$;
\item[Case 2:] $g(x) = g(x^1) + g(x^2) - g(x^1 \sqcup x^2)$ for some $x^1, x^2 \in {\rm dom}\ g$ with $x^1 \sqcup x^2 \neq x^1 \sqcap x^2 = x$ and $Z(x) > \max\{Z(x^1), Z(x^2)\}$.
\end{description}
Let $\mathcal{Z} := \{ i \mid z_i \neq 0 \}$. Let $\mathcal{X} := \{ i \not\in \mathcal{Z} \mid x_i \neq 0 \}$ and $\mathcal{Y} := \{ i \not\in \mathcal{Z} \mid y_i \neq 0 \}$. Note that $\mathcal{X} = \mathcal{Y}$, $0 \neq x_i \neq y_i \neq 0$ for $i \in \mathcal{X}$, and $x_i = y_i \neq 0$ for $i \in \mathcal{Z}$, since $z = x \sqcap y = x \sqcup y$. Hence $Z(x) = Z(y) \geq 1$, i.e., $y \in {\rm dom}\ g \setminus {\rm dom}\ f$.

In Case 1, it holds that $x_i = x^1_i = x^2_i$ for $i \in \mathcal{X} \cup \mathcal{Z}$ and $x^1_i \neq x^2_i$ for $i \not\in \mathcal{X} \cup \mathcal{Z}$. We obtain
\begin{align}
g(z) &= \frac{1}{2} (g(x) + g(y))\label{c1:1} \\
&= \frac{1}{2} \left( \left(\frac{1}{2} g(x^1) + \frac{1}{2} g(x^2)\right) + g(y) \right)\label{c1:2} \\
&\geq \frac{1}{2} \left( \frac{1}{2} g(x^1 \sqcap y) + \frac{1}{2} g(x^1 \sqcup y) + \frac{1}{2} g(x^2 \sqcap y) + \frac{1}{2} g(x^2 \sqcup y) \right)\label{c1:3} \\
&= \frac{1}{2} \left( g(z) + \frac{1}{2} g(x^1 \sqcup y) + \frac{1}{2} g(x^2 \sqcup y) \right)\label{c1:4} \\
&\geq \frac{1}{2} (g(z) + g(z)) = g(z).\label{c1:5}
\end{align}
Indeed, (\ref{c1:1}) $=$ (\ref{c1:2}) follows from the assumption of Case 1, and (\ref{c1:2}) $\geq$ (\ref{c1:3}) follows from the $k$-submodularity ($g(x^1) + g(y) \geq g(x^1 \sqcap y) + g(x^1 \sqcup y)$ and $g(x^2) + g(y) \geq g(x^2 \sqcap y) + g(x^2 \sqcup y)$).
Since $x_i = x^1_i = x^2_i = y_i$ for $i \in \mathcal{Z}$, $x_i = x^1_i = x^2_i \neq y_i$ for $i \in \mathcal{Y}$, and $y_i = 0$ for $i \not\in \mathcal{Y} \cup \mathcal{Z}$, it holds that $x^1 \sqcap y = x^2 \sqcap y = z$. Hence (\ref{c1:3}) $=$ (\ref{c1:4}). Since $(x^1 \sqcup y)_i = (x^2 \sqcup y)_i$ for $i \in \mathcal{Z}$, $(x^1 \sqcup y)_i = (x^2 \sqcup y)_i = 0$ for $i \in \mathcal{X}$, and $x^1_i = (x^1 \sqcup y)_i \neq (x^2 \sqcup y)_i = x^2_i$ for $i \not\in \mathcal{X} \cup \mathcal{Z}$, it holds that $(x^1 \sqcup y) \sqcap (x^2 \sqcup y) = (x^1 \sqcup y) \sqcup (x^2 \sqcup y) = z$. Hence (\ref{c1:4}) $\geq$ (\ref{c1:5}) follows from the $k$-submodularity.
This means that all inequalities are equalities.
Therefore $g(x^1) + g(y) = g(x^1 \sqcap y) + g(x^1 \sqcup y)$ by (\ref{c1:2}) = (\ref{c1:3}). (It is also true that $g(x^2) + g(y) = g(x^2 \sqcap y) + g(x^2 \sqcup y)$.)
Here $g(x^1 \sqcap y) = g(z)$.
Thus $g(z) = g(x^1) + g(y) - g(x^1 \sqcup y)$.
Moreover it holds that $z \neq x^1 \sqcup y$.
Indeed, for $i \not\in \mathcal{Y} \cup \mathcal{Z}$ we have $z_i = (x^1 \sqcap y)_i = 0$ and $(x^1 \sqcup y)_i = x^1_i \neq 0$, since $y_i = 0$.
So $g(z) = g(x^1) + g(y) - g(x^1 \sqcup y)$ means that $x^1, y$ satisfy 2.

In Case 2, it holds that $x_i = x^1_i = x^2_i$ for $i \in \mathcal{X} \cup \mathcal{Z}$ and $(x^1 \sqcap x^2)_i = 0$ for $i \not\in \mathcal{X} \cup \mathcal{Z}$.
We obtain
\begin{align}
g(z) &= \frac{1}{2} (g(x) + g(y))\label{c2:1} \\
&= \frac{1}{2} (g(x^1) + g(x^2) - g(x^1 \sqcup x^2) + g(y))\label{c2:2} \\
&\geq \frac{1}{2} (g(x^1 \sqcap y) + g(x^1 \sqcup y) + g(x^2) - g(x^1 \sqcup x^2))\label{c2:3} \\
&\geq \frac{1}{2} (g(z) + g((x^1 \sqcup y) \sqcap x^2) + g((x^1 \sqcup y) \sqcup x^2) - g(x^1 \sqcup x^2))\label{c2:4} \\
&= \frac{1}{2} (g(z) + g(z) + g(x^1 \sqcup x^2) - g(x^1 \sqcup x^2)) = g(z).\label{c2:5}
\end{align}
Indeed, (\ref{c2:1}) $=$ (\ref{c2:2}) follows from the assumption of Case 2, and (\ref{c2:2}) $\geq$ (\ref{c2:3}) follows from the $k$-submodularity.
Since $x_i = x^1_i = y_i$ for $i \in \mathcal{Z}$, $x_i = x^1_i \neq y_i$ for $i \in \mathcal{X}$, and $y_i = 0$ for $i \not\in \mathcal{X} \cup \mathcal{Z}$, it holds that $x^1 \sqcap y = z$. Hence (\ref{c2:3}) $\geq$ (\ref{c2:4}) follows from the $k$-submodularity. Since $(x^1 \sqcup y)_i = x^2_i$ for $i \in \mathcal{Z}$, $(x^1 \sqcup y)_i = 0$ for $i \in \mathcal{X}$, and $(x^1 \sqcup y)_i = x^1_i$ for $i \not\in \mathcal{X} \cup \mathcal{Z}$, it holds that $(x^1 \sqcup y) \sqcap x^2 = z$ and $(x^1 \sqcup y) \sqcup x^2 = x^1 \sqcup x^2$. Hence (\ref{c2:4}) $=$ (\ref{c2:5}).
This means that all inequalities are equalities. Therefore $g(x^1) + g(y) = g(x^1 \sqcap y) + g(x^1 \sqcup y)$ by (\ref{c2:2}) = (\ref{c2:3}). Here $g(x^1 \sqcap y) = g(z)$.
Thus $g(z) = g(x^1) + g(y) - g(x^1 \sqcup y)$.
Moreover it holds that $z \neq x^1 \sqcup y$.
Indeed, there exists $i \not\in \mathcal{Y} \cup \mathcal{Z}$ such that $x^1_i \neq 0$, since $Z(y) = Z(x) > Z(x^1)$.
Hence $z_i = (x^1 \sqcap y)_i = 0$ and $(x^1 \sqcup y)_i = x^1_i \neq 0$.
So $g(z) = g(x^1) + g(y) - g(x^1 \sqcup y)$ means that $x^1, y$ satisfy 2.

Next let us prove that if $f$ is integer valued, then for all $z \in {\rm dom}\ g \setminus {\rm dom}\ f$, $g(z)$ is half-integral.
The proof is by induction on $Z(z)$. The case $Z(z) = 1$ is trivial.
Suppose that $Z(z) \geq 2$.
For all $z \in {\rm dom}\ g \setminus {\rm dom}\ f$, there exist $x, y \in {\rm dom}\ g$ with $z = x \sqcap y$ and $Z(z) > \max\{Z(x), Z(y)\}$ satisfying 1 or 2:
\begin{enumerate}
\item $x, y \in {\rm dom}\ f \text{ and } \displaystyle g(z) = \frac{1}{2} (g(x) + g(y))$;
\item $x \sqcap y \neq x \sqcup y \text{ and } g(z) = g(x) + g(y) - g(x \sqcup y)$.
\end{enumerate}
In case 1, it holds that $g(z)$ is half-integral, since both $g(x)$ and $g(y)$ are integral.
In case 2, it also holds that $g(z)$ is half-integral, since $g(x)$, $g(y)$, and $g(x \sqcup y)$ are half-integral by the induction hypothesis.
Hence $g(z)$ is half-integral, as required.
\end{proof}

Finally we establish property 2 in Theorem \ref{thm:algo}.
\begin{prop}\label{prop:opt}
Suppose that $f : [k]^2 \rightarrow \overline{\mathbb{R}}$ has a $k$-submodular relaxation. Let $g$ be a $k$-submodular relaxation of $f$ constructed by Algorithm \ref{algo}. For every $k$-submodular relaxation $g^\prime$ of $f$, it holds that
\[ g(z) \geq g^\prime(z) \quad (z \in {\rm dom}\ g).\]
\end{prop}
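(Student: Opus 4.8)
The plan is to prove the inequality by induction on $Z(z)$, which for $n=2$ takes only the values $0,1,2$. For $z\in{\rm dom}\ f$ (i.e.\ $Z(z)=0$) there is nothing to prove, since both $g$ and $g'$ restrict to $f$ there. For $Z(z)=1$, say $z=(0,b)$, Algorithm~\ref{algo} sets $g(0,b)=\tfrac12\min\{f(p,b)+f(q,b): p\neq q,\ (p,b),(q,b)\in{\rm dom}\ f\}$, every such update being of the first (``$x\sqcap y=x\sqcup y$'') kind. On the other hand, for any $(p,b),(q,b)\in{\rm dom}\ f$ with $p\neq q$ we have $(p,b)\sqcap(q,b)=(p,b)\sqcup(q,b)=(0,b)$, so $k$-submodularity of $g'$ gives $2g'(0,b)\le g'(p,b)+g'(q,b)=f(p,b)+f(q,b)$; minimizing over pairs yields $g'(0,b)\le g(0,b)$. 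The symmetric computation handles $z=(c,0)$.

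It remains to treat the single point $z=(0,0)$, where $g(0,0)$ is the minimum, over all $x,y\in{\rm dom}\ g$ with $x\sqcap y=(0,0)$, of the right-hand side of (\ref{ineq:z}). I would first isolate the \emph{safe} certificates: the type with $x\sqcap y=x\sqcup y=(0,0)$ and $x,y\in{\rm dom}\ f$ (value $\tfrac12(f(x)+f(y))$), and the type with $x\sqcup y\in{\rm dom}\ f$ (value $g(x)+g(y)-f(x\sqcup y)$, where $Z(x),Z(y)\le1$). For the first, $k$-submodularity of $g'$ gives directly $g'(0,0)\le\tfrac12(f(x)+f(y))$. For the second, $k$-submodularity of $g'$ gives $g'(0,0)\le g'(x)+g'(y)-g'(x\sqcup y)$; since $x\sqcup y\in{\rm dom}\ f$ forces $g'(x\sqcup y)=f(x\sqcup y)$, and since $g'(x)\le g(x)$ and $g'(y)\le g(y)$ by the $Z=1$ step, this yields $g'(0,0)\le g(x)+g(y)-f(x\sqcup y)$. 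Thus $g'(0,0)$ is bounded above by the value of every safe certificate.

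The main obstacle is the remaining certificates, those with $x\sqcap y=(0,0)\neq x\sqcup y$ and $Z(x\sqcup y)=1$: here the inequality $g'(0,0)\le g'(x)+g'(y)-g'(x\sqcup y)$ is useless, since $g'(x\sqcup y)$ enters with the wrong sign and $g'$ may be far below $g$ at $x\sqcup y$. The key claim, and exactly the place where $n=2$ is essential, is that such an unsafe certificate is always dominated by a safe one, so it never affects the minimum defining $g(0,0)$. Up to swapping the two arguments and the two coordinates, such a certificate has $x=(a,b)\in{\rm dom}\ f$, $y=(c,0)$ with $a\neq c$, and $x\sqcup y=(0,b)$, with value $V=f(a,b)+g(c,0)-g(0,b)$. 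Since $(c,0),(0,b)\in{\rm dom}\ g$, closure under $\sqcup$ gives $(c,b)=(c,0)\sqcup(0,b)\in{\rm dom}\ g\cap[k]^2={\rm dom}\ f$, so $(c,0),(0,b)$ is itself a safe certificate of value $W=g(c,0)+g(0,b)-f(c,b)$. The identity $V-W=f(a,b)+f(c,b)-2g(0,b)$, together with the $Z=1$ bound $g(0,b)\le\tfrac12(f(a,b)+f(c,b))$ (valid because $(a,b),(c,b)\in{\rm dom}\ f$), gives $V\ge W$. Hence every unsafe certificate value is at least a safe one, so $g(0,0)$ equals the minimum over safe certificates only; combined with the previous paragraph this gives $g'(0,0)\le g(0,0)$, completing the induction. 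I expect the only fussy part to be the bookkeeping of the case distinction (which coordinate of $x\sqcup y$ is nonzero, and which of $x,y$ lies in ${\rm dom}\ f$), with all cases collapsing to the displayed cancellation.
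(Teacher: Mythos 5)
Your proof is correct and follows essentially the same route as the paper's: your $Z=1$ base case is the paper's Case $1'$, your ``safe'' certificate with $x\sqcup y\in{\rm dom}\ f$ is the paper's Case $2'$, and your domination of the unsafe certificate $V=f(a,b)+g(c,0)-g(0,b)$ by $W=g(c,0)+g(0,b)-f(c,b)$ via the cancellation $V-W=f(a,b)+f(c,b)-2g(0,b)\geq 0$ is literally the paper's reduction of Case $3'$ to Case $2'$ (the paper then closes with the chain $g(0,0)\geq g(x_1,0)+g(0,y_2)-g(x_1,y_2)\geq g(0,0)$ forcing equality, which is cosmetically different from, but equivalent to, your ``the minimum is attained among safe certificates''). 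One slip in your bookkeeping: your taxonomy at $z=(0,0)$ misses the half-sum certificates whose arguments are not in ${\rm dom}\ f$, namely $x=(a,0)$, $y=(c,0)$ with $a\neq c$ (or symmetrically $x=(0,b)$, $y=(0,d)$), for which $x\sqcap y=x\sqcup y=(0,0)$ and the update value is $\tfrac12(g(x)+g(y))$; these fall into neither your safe list (which requires $x,y\in{\rm dom}\ f$) nor your unsafe list (which requires $x\sqcup y\neq(0,0)$). The fix is immediate and uses exactly your existing mechanism: $k$-submodularity of $g'$ gives $2g'(0,0)\leq g'(x)+g'(y)$, and the $Z=1$ induction gives $g'(x)\leq g(x)$, $g'(y)\leq g(y)$, so these certificates are safe too. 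The paper avoids this enumeration entirely by invoking Proposition~\ref{prop:half}, which guarantees that half-sum certificates with arguments outside ${\rm dom}\ f$ never need to be considered; your direct enumeration is a legitimate substitute for $n=2$ once the omitted shape is added.
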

\begin{proof}
Let $\mathcal{G}$ be a set of $k$-submodular relaxations of $f$. By Algorithm \ref{algo}, for any $z \in {\rm dom}\ g \setminus {\rm dom}\ f$, there exist three cases of $x, y \in [0, k]^2$ with $z = x \sqcap y$ in defining $g(z)$ as follows:
\begin{description}
\item[Case 1:] $Z(x) = Z(y) = 0$,
\item[Case 2:] $Z(x) = Z(y) = 1$,
\item[Case 3:] $Z(x) = 1, Z(y) = 0$.
\end{description}
By Proposition \ref{prop:half}, we do not have to consider the case of $g(z) = (g(x) + g(y))/2$ with $x \in {\rm dom}\ g \setminus {\rm dom}\ f$. So we can represent the three cases as follows:
\begin{description}
\item[Case $1^\prime$:] $\displaystyle g(z) = \frac{1}{2} (g(x_1, x_2) + g(y_1, y_2))$ for some $x_1, x_2, y_1, y_2 \in [k]$,
\item[Case $2^\prime$:] $g(z) = g(0, 0) = g(x, 0) + g(0, y) - g(x, y)$ for some $x, y \in [k]$,
\item[Case $3^\prime$:] $g(z) = g(0, 0) = g(x_1, 0) + g(y_1, y_2) - g(0, y_2)$ for some $x_1, y_1, y_2 \in [k]$ with $x_1 \neq y_1$.
\end{description}
It suffices to prove that for all $g^\prime \in \mathcal{G}$, we have $g(z) \geq g^\prime(z)$ for all cases. Note that for $x \in {\rm dom}\ f = {\rm dom}\ g \cap [k]^2 = {\rm dom}\ g^\prime \cap [k]^2$, it holds that $g(x) = g^\prime(x)$. Furthermore it holds that ${\rm dom}\ g^\prime \supseteq C_{\sqcap, \sqcup}({\rm dom}\ f) = {\rm dom}\ g$, since ${\rm dom}\ g^\prime \supseteq {\rm dom}\ f$ and $g^\prime$ is $k$-submodular.

First we shall consider Case $1^\prime$. For any $g^\prime \in \mathcal{G}$, it holds that $g^\prime(z) \leq (g^\prime(x_1, x_2) + g^\prime(y_1, y_2))/2 = (g(x_1, x_2) + g(y_1, y_2))/2 = g(z)$. So $g^\prime(z) \leq g(z)$, as required.

Next we consider Case $2^\prime$. For any $g^\prime \in \mathcal{G}$, it holds that $g^\prime(0, 0) \leq g^\prime(x, 0) + g^\prime(0, y) - g^\prime(x, y)$. In addition, we have $g^\prime(x, 0) \leq g(x, 0)$ and $g^\prime(0, y) \leq g(0, y)$, since $g(x, 0)$ and $g(0, y)$ are in Case $1'$.
Hence $g^\prime(0, 0) \leq g^\prime(x, 0) + g^\prime(0, y) - g^\prime(x, y) \leq g(x, 0) + g(0, y) - g(x, y) = g(0, 0)$.
So $g^\prime(0, 0) \leq g(0, 0)$, as required.

Finally we consider Case $3^\prime$. We show that this case can reduce to Case $2^\prime$. It holds that $g(x_1, 0) + g(y_1, y_2) - g(0, y_2) \geq g(x_1, 0) + g(0, y_2) - g(x_1, y_2)$ since
\begin{align*}
&(g(x_1, 0) + g(y_1, y_2) - g(0, y_2)) - (g(x_1, 0) + g(0, y_2) - g(x_1, y_2))\\
=\ &g(y_1, y_2) + g(x_1, y_2) - 2g(0, y_2) \geq 0.
\end{align*}
Hence we have
\begin{align*}
g(0, 0) &= g(x_1, 0) + g(y_1, y_2) - g(0, y_2)\\
&\geq g(x_1, 0) + g(0, y_2) - g(x_1, y_2)\\
&\geq g(0, 0).
\end{align*}
So all inequalities are equalities. This means that $g(0, 0) = g(x_1, 0) + g(0, y_2) - g(x_1, y_2)$, and hence Case $3^\prime$ can reduce to Case $2^\prime$, as required.
\end{proof}

We found, by computer experiments, functions on $[k]^n$ for $k, n \geq 3$ or $k = 2, n \geq 4$ such that the maximal relaxation does not exist,
and the proposed algorithm does not output an optimal relaxation $g$, i.e., the minimum value of $g$ is greater than the minimum value of any relaxation.
(We do not know the case for $k = 2, n = 3$.)

\section*{Acknowledgments}
We thank Satoru Fujishige and Kazuo Murota for careful reading and numerous helpful comments, and the referees for helpful comments.
We thank Magnus Wahlstr{\"o}m for discussion on FPT applications.


\end{document}